\documentclass[12pt]{amsart}

\usepackage{fullpage}
\usepackage{mathtools}
\usepackage{shuffle}
\mathtoolsset{showonlyrefs}
\usepackage{amscd,amsmath,amssymb,amsthm}

\usepackage{tikz}
\usepackage{tikz-cd}
\usetikzlibrary{matrix,arrows,decorations.pathmorphing}

\makeatletter
\@namedef{subjclassname@2020}{%
	\textup{2020} Mathematics Subject Classification}
\makeatother

\newtheorem{corollary}{Corollary}[section]
\newtheorem{lemma}[corollary]{Lemma}
\newtheorem{proposition}[corollary]{Proposition}

\newtheorem{theorem}[corollary]{Theorem}

\numberwithin{equation}{section}

\theoremstyle{definition}
\newtheorem{definition}[corollary]{Definition}
\newtheorem{example}[corollary]{Example}
\newtheorem{remark}[corollary]{Remark}

\usepackage{hyperref}

\hypersetup{%
	pdfborder = {0 0 0}
}

\begin{document}

\title[Iterated primitives of meromorphic quasimodular forms]{Iterated primitives of meromorphic quasimodular forms for $\operatorname{SL}_2(\mathbb Z)$}

\author{Nils Matthes}
\address{Mathematical Institute, University of Oxford, Andrew Wiles Building, Radcliffe Observatory Quarter, Woodstock Road, Oxford OX2 6GG, United Kingdom}
\email{nils.matthes@maths.ox.ac.uk}
\thanks{The author thanks Tiago J. Fonseca, Erik Panzer, Vicenţiu Paşol, and Wadim Zudilin for helpful discussions. Moreover, he thanks Claudia Alfes-Neumann, Tiago J. Fonseca, Jan-Willem van Ittersum, Masanobu Kaneko, Martin Raum, Wadim Zudilin, and the anonymous referee for valuable comments and suggestions on previous versions of the manuscript. Discussions with Johannes Broedel and Claude Duhr led to a significant simplification in the proof of Theorem 6.3, for which the author thanks both of them. Finally, special thanks go to Henrik Bachmann for organizing the Japan--Europe Number Theory Exchange seminar where the author first learned about the work of Paşol--Zudilin, which served as inspiration for the present paper. This project has received funding from the European Research Council (ERC) under the European Union’s Horizon 2020 research and innovation programme (grant agreement No. 724638).}

\address{Department of Mathematical Sciences, University of Copenhagen, Universitetsparken 5, 2100 Copenhagen Ø, DENMARK}

\subjclass[2020]{Primary 11F37 (11F67)}

\date{\today}

\begin{abstract}
We introduce and study iterated primitives of meromorphic quasimodular forms for $\operatorname{SL}_2(\mathbb Z)$, generalizing work of Manin and Brown for holomorphic modular forms. We prove that the algebra of iterated primitives of meromorphic quasimodular forms is naturally isomorphic to a certain explicit shuffle algebra. We deduce from this an Ax--Lindemann--Weierstrass type algebraic independence criterion for primitives of meromorphic quasimodular forms which includes a recent result of Paşol--Zudilin as a special case. We also study spaces of meromorphic modular forms with restricted poles, generalizing results of Guerzhoy in the weakly holomorphic case.
\end{abstract}

\maketitle

\section{Introduction} \label{sec:1}

It is a well-known result in the theory of modular forms that the Eisenstein series
\[
E_2(\tau)=1-24\sum_{n=1}^{\infty}\frac{nq^n}{1-q^n},\qquad E_4(\tau)=1+240\sum_{n=1}^{\infty}\frac{n^3q^n}{1-q^n},\qquad E_6(\tau)=1-504\sum_{n=1}^{\infty}\frac{n^5q^n}{1-q^n},
\]
where $q:=e^{2\pi i\tau}$, are algebraically independent over the field $\mathbb C(q,\tau)$,  \cite{Mahler}. In other words, the rational function field $\mathbb C(E_2,E_4,E_6,q,\tau)$ has transcendence degree $5$. More recently, Paşol and Zudilin, \cite{PasolZudilin}, studied an extension of $\mathbb C(E_2,E_4,E_6,q,\tau)$ defined by adjoining primitives of the meromorphic modular forms
\[
\frac{\Delta(\tau)}{E_4(\tau)^2}, \qquad \frac{E_4(\tau)\Delta(\tau)}{E_6(\tau)^2}, \qquad \frac{E_6(\tau)\Delta(\tau)}{E_4(\tau)^3},
\]
where $\Delta(\tau)=(E_4(\tau)^3-E_6(\tau)^2)/1728$, and prove that these primitives are algebraically independent over $\mathbb C(E_2,E_4,E_6,q,\tau)$. Similar results for primitives of Eisenstein series were obtained previously by Kozlov, \cite{Kozlov}.

The purpose of this paper is to provide a general algebraic framework for such independence results in the case of general meromorphic quasimodular forms for the full modular group $\operatorname{SL}_2(\mathbb Z)$. The main idea is that every algebraic relation among primitives of meromorphic quasimodular forms can be rewritten as a linear relation among \emph{iterated primitives}. In the holomorphic case, these were introduced by Manin, \cite{Manin}, for cusp forms and by Brown, \cite{Brown:MMV}, in general; see also \cite{Matthes:AlgebraicStructure} for the quasimodular case. In this paper, we extend the definition of iterated primitives to the meromorphic case. Our approach is purely formal and only involves the Laurent expansion at the cusp $q=0$. Due to the possibility of higher order poles, the construction of iterated primitives is more involved than in the holomorphic case and requires a suitable renormalization scheme; see Section \ref{sec:4} for the details.

We next turn to the problem of describing the $K$-algebra $\mathcal{I}^{\mathcal{Q}\mathcal{M}}$ of iterated primitives of meromorphic quasimodular forms, where $K:=\mathbb C(E_2,E_4,E_6,q)$.\footnote{The reason for not adding $\tau$ to $K$ is that $\tau$ naturally occurs as a primitive, namely of the constant function $(2\pi i)^{-1} \in \mathcal{Q}\mathcal{M}$. In particular, $\tau \in \mathcal{I}^{\mathcal{Q}\mathcal{M}}$.} Let $\mathcal{Q}\mathcal{M}$ denote the graded $\mathbb C$-algebra of meromorphic quasimodular forms, and let $K\langle \mathcal{Q}\mathcal{M}\rangle$ denote the corresponding shuffle algebra. Its elements are $K$-linear combinations of words $[f_1|\ldots|f_n] \in \mathcal{Q}\mathcal{M}^{\otimes n}$, and mapping such a word to the corresponding iterated primitive defines a surjection of $K$-algebras
\begin{equation} \label{eqn:morphism}
I: K\langle \mathcal{Q}\mathcal{M}\rangle\rightarrow \mathcal{I}^{\mathcal{Q}\mathcal{M}}, \qquad [f_1|\ldots|f_n] \mapsto I(f_1,\ldots,f_n).
\end{equation}
Note that this map is far from being injective. In fact, for every $f \in \mathcal{Q}\mathcal{M}$, the element $[\delta(f)]-f \in K\langle \mathcal{Q}\mathcal{M}\rangle$, where $\delta=q\frac{d}{dq}$, lies in the kernel of $I$. On the other hand, it turns out that the existence of derivatives in $\mathcal{Q}\mathcal{M}$ is the only obstacle for $I$ to be injective. More precisely, let $\mathcal{C}$ be a $\mathbb C$-linear complement of the subspace $\delta(\mathcal{Q}\mathcal{M})\subset\mathcal{Q}\mathcal{M}$ of derivatives. The following theorem is our first main result.
\begin{theorem}[see Theorem \ref{thm:main} below] \label{thm:intro1}
The morphism \eqref{eqn:morphism} restricts to an isomorphism of $K$-algebras:
\[
K\langle \mathcal{C}\rangle\cong \mathcal{I}^{\mathcal{Q}\mathcal{M}}.
\]
\end{theorem}
This theorem is a generalization of the main result of \cite{Matthes:AlgebraicStructure} to the meromorphic case. As in \textit{loc.cit.}, the hard part of the proof is injectivity for which we employ a general linear independence criterion for solutions of first order differential equations, \cite[Theorem 2.1]{DDMS}. In our situation, this criterion amounts to verifying that, if the primitive of a meromorphic quasimodular form is contained in $K$, then that primitive must already lie in the subring $\mathcal{Q}\mathcal{M}$; see Lemma \ref{lem:main} below.

Theorem \ref{thm:intro1} can be viewed as a quite general independence result for iterated primitives of meromorphic quasimodular forms. In fact, the Cartier--Milnor--Moore theorem implies that $K\langle \mathcal{C}\rangle$ is a polynomial algebra for which Radford, \cite{Radford}, has given explicit generators in terms of certain Lyndon words. Combining his result with Theorem \ref{thm:intro1} leads to the following Ax--Lindemann--Weierstrass type criterion for algebraic independence.
\begin{theorem}[see Theorem \ref{thm:criterion} below for a more general result] \label{thm:intro2}
	Given $f_1,\ldots,f_n \in \mathcal{Q}\mathcal{M}$, their primitives $I(f_1),\ldots,I(f_n)$ are algebraically independent over $K$ if and only if the classes of $f_1,\ldots,f_n$ in the quotient $\mathcal{Q}\mathcal{M}/\delta(\mathcal{Q}\mathcal{M})$ are $\mathbb C$-linearly independent.
\end{theorem}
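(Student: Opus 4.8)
The plan is to derive Theorem~\ref{thm:intro2} as a direct consequence of Theorem~\ref{thm:intro1}, reducing a statement about algebraic independence of ordinary primitives $I(f_1),\ldots,I(f_n)$ to a statement about \emph{linear} independence of iterated primitives, exploiting the shuffle product. The key structural observation is that algebraic independence over $K$ of the elements $I(f_i)$ is equivalent to the injectivity of the polynomial evaluation map $K[X_1,\ldots,X_n]\to\mathcal{I}^{\mathcal{Q}\mathcal{M}}$, $X_i\mapsto I(f_i)$, and that via the shuffle structure the monomials in the $I(f_i)$ are expressible as $K$-linear combinations of iterated primitives $I(g_1,\ldots,g_r)$ with each $g_j\in\{f_1,\ldots,f_n\}$.

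\medskip

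First I would set up the single-variable case and the shuffle dictionary. Since each $I(f_i)$ is a primitive of $f_i$, a product $I(f_{i_1})\cdots I(f_{i_d})$ is, by the shuffle relation encoded in the $K$-algebra morphism $I$, equal to the image under $I$ of the shuffle product $[f_{i_1}]\shuffle\cdots\shuffle[f_{i_d}]$ in $K\langle\mathcal{Q}\mathcal{M}\rangle$; that is,
\[
I(f_{i_1})\cdots I(f_{i_d})=I\bigl([f_{i_1}]\shuffle\cdots\shuffle[f_{i_d}]\bigr).
\]
Thus any polynomial $P(X_1,\ldots,X_n)\in K[X]$ evaluated at the $I(f_i)$ equals $I(w)$ for an explicit element $w=w_P\in K\langle\mathcal{Q}\mathcal{M}\rangle$, namely the image of $P$ under the algebra homomorphism sending $X_i\mapsto[f_i]$. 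The upshot is that $P(I(f_1),\ldots,I(f_n))=0$ if and only if $w_P$ lies in $\ker I$.

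\medskip

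Next I would invoke Theorem~\ref{thm:intro1} to control $\ker I$. Writing $\mathcal{Q}\mathcal{M}=\mathcal{C}\oplus\delta(\mathcal{Q}\mathcal{M})$, the theorem says $I$ restricts to an isomorphism on $K\langle\mathcal{C}\rangle$; equivalently, $\ker I$ is spanned over $K$ by the relations $[\delta(f)]-f$ coming from derivatives, and the induced map on $K\langle\mathcal{C}\rangle$ is injective. For the \emph{if} direction, suppose the classes of $f_1,\ldots,f_n$ in $\mathcal{Q}\mathcal{M}/\delta(\mathcal{Q}\mathcal{M})$ are $\mathbb{C}$-linearly independent. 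After projecting each $f_i$ onto $\mathcal{C}$ (which does not change $I(f_i)$ up to an element of $\mathcal{Q}\mathcal{M}\subset K$, hence does not affect algebraic independence over $K$), the letters $f_i$ may be taken to lie in $\mathcal{C}$ and to be $\mathbb{C}$-linearly independent there. Then $w_P$ is a $K$-linear combination of words in $K\langle\mathcal{C}\rangle$, and by the injectivity half of Theorem~\ref{thm:intro1} it maps to zero only if $w_P=0$; but $w_P$ is the image of $P$ under the injective polynomial-to-shuffle embedding (injectivity here is the classical fact that a free shuffle algebra on linearly independent generators is a polynomial algebra in the sense that distinct monomials remain linearly independent, so $X_i\mapsto[f_i]$ is injective on polynomials), forcing $P=0$. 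For the \emph{only if} direction, a nontrivial $\mathbb{C}$-linear relation $\sum_i c_i f_i=\delta(g)$ for some $g\in\mathcal{Q}\mathcal{M}$ gives $\sum_i c_i I(f_i)=g+\text{const}\in K$ (using $I(\delta g)=g$ up to the integration constant), which is a nontrivial polynomial (indeed linear) relation over $K$, so the $I(f_i)$ are algebraically dependent.

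\medskip

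The main obstacle I anticipate is the step identifying $\ker I$ precisely enough to conclude $w_P=0$: I must confirm that the only relations are the derivative relations, i.e. that passing to $\mathcal{C}$-letters genuinely removes all of $\ker I$, and that the shuffle algebra $K\langle\mathcal{C}\rangle$ is a \emph{free polynomial} algebra so that algebraic independence of generators is equivalent to linear independence of the letters. This is exactly where the Cartier--Milnor--Moore/Radford input flagged in the introduction enters: the shuffle algebra on a graded vector space is a polynomial algebra, and its algebra generators correspond to a basis consisting of Lyndon words, so that the degree-one part $\mathcal{C}$ injects into the set of polynomial generators. Making the reduction ``project $f_i$ to $\mathcal{C}$ without affecting algebraic independence'' fully rigorous — tracking that the discarded $\delta$-parts only contribute elements of $K$ — is the delicate bookkeeping, but once the shuffle-to-polynomial dictionary is in place the equivalence follows formally.
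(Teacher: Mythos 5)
Your proposal is correct and takes essentially the same route as the paper's proof of the general version, Theorem \ref{thm:criterion}: the ``only if'' direction applies $I$ to a relation $\sum_i c_i f_i=\delta(g)$ to produce a linear relation over $K$, and the ``if'' direction reduces, via the isomorphism $K\langle \mathcal{C}\rangle\cong \mathcal{I}^{\mathcal{Q}\mathcal{M}}$ of Theorem \ref{thm:main}, to the algebraic independence of the single-letter words, which are Lyndon words and hence part of a polynomial basis by Radford's theorem (Theorem \ref{thm:Radford}). The only cosmetic difference is that you project the $f_i$ onto a fixed complement $\mathcal{C}$ and track the discrepancies $I(\delta(h_i))\in \mathcal{Q}\mathcal{M}+\mathbb C\subset K$, whereas the paper simply chooses $\mathcal{C}$ to contain $\operatorname{Span}_{\mathbb C}\{f_1,\ldots,f_n\}$ -- both are legitimate since Theorem \ref{thm:main} holds for an arbitrary complement (your unused side remark that $\ker I$ is \emph{spanned} by the elements $[\delta(f)]-f$ is imprecise, but nothing in the argument depends on it).
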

At this point, both Theorems \ref{thm:intro1} and \ref{thm:intro2} are of somewhat limited use since we do not yet have any nontrivial information about the space $\mathcal{C} \cong \mathcal{Q}\mathcal{M}/\delta(\mathcal{Q}\mathcal{M})$. To address this, let $\mathcal{M}_k\subset \mathcal{Q}\mathcal{M}$ denote the subspace of meromorphic modular forms of weight $k$ and, for $k\geq 2$, let $\widetilde{\mathcal{M}}_k\subset \mathcal{M}_k$ denote the subspace of those meromorphic modular forms whose pole order at the cusp $\infty$ is bounded (from above) by $\dim S_k$, and whose pole order at any other point is bounded by $k-1$. A similar space was studied by Guerzhoy, \cite{Guerzhoy}, in the more restrictive context of weakly holomorphic modular forms, where only poles at $\infty$ are allowed.
\begin{theorem}[see Theorem \ref{thm:complement} below] \label{thm:intro3}
	\[
	\mathcal{Q}\mathcal{M}_k=
	\begin{cases}
		\delta(\mathcal{Q}\mathcal{M}_{k-2})\oplus \mathcal{M}_k, &\mbox{if } k\leq 1,\\
		\delta(\mathcal{Q}\mathcal{M}_{k-2})\oplus \mathcal{M}_{2-k}\cdot E_2^{k-1} \oplus \widetilde{\mathcal{M}}_k, & \mbox{if } k\geq 2
	\end{cases}
	\]
\end{theorem}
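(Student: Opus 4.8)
The plan is to reduce the statement, by a downward induction on the $E_2$-depth, to a question about meromorphic \emph{modular} forms of weight $k$, which is then settled by a local analysis of poles. Throughout one may assume $k$ is even, since $\mathcal M_{k-2j}=0$ in odd weight and hence $\mathcal Q\mathcal M_k=0$ for odd $k$. The starting point is the depth decomposition $\mathcal Q\mathcal M_k=\bigoplus_{j\geq 0}\mathcal M_{k-2j}\cdot E_2^j$ together with the explicit formula
\[
\delta(h\,E_2^{\,j})=\vartheta(h)\,E_2^{\,j}+\tfrac{k-2-j}{12}\,h\,E_2^{\,j+1}-\tfrac{j}{12}\,h\,E_4\,E_2^{\,j-1},\qquad h\in\mathcal M_{k-2-2j},
\]
where $\vartheta=\delta-\tfrac{w}{12}E_2$ is the Serre derivative on weight $w=k-2-2j$, sending $\mathcal M_{k-2-2j}$ to $\mathcal M_{k-2j}$. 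The crucial observation is that the top-depth coefficient $\tfrac{k-2-j}{12}$ is invertible precisely when $j\neq k-2$; equivalently, the unique depth that can never occur as the \emph{top} depth of a derivative is $j+1=k-1$. This blocked depth, together with Bol's identity (which relates weight $2-k$ to weight $k$ via $\delta^{k-1}$ and sends genuine modular forms to genuine modular forms), already explains the appearance of the weight-$(2-k)$ summand $\mathcal M_{2-k}\cdot E_2^{k-1}$ and the number $k-1$.

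First I would prove the spanning statement $\mathcal Q\mathcal M_k=\delta(\mathcal Q\mathcal M_{k-2})+\mathcal M_k+\mathcal M_{2-k}E_2^{k-1}$ (and, for $k\leq 1$, $\mathcal Q\mathcal M_k=\delta(\mathcal Q\mathcal M_{k-2})+\mathcal M_k$). Given $F=\sum_j g_jE_2^{\,j}$ of top depth $p$, I remove the leading term $g_pE_2^{\,p}$ whenever $p\geq 1$ and $p\neq k-1$ by subtracting $\delta\big(\tfrac{12}{k-1-p}\,g_pE_2^{\,p-1}\big)$, whose leading term matches and whose corrections live in strictly lower depth; iterating leaves a depth-$0$ contribution in $\mathcal M_k$ and, when $k\geq 2$, a blocked depth-$(k-1)$ contribution whose coefficient lies in $\mathcal M_{2-k}$. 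For $k\leq 1$ the blocked depth $k-1<0$ never occurs, so $F$ reduces entirely to $\mathcal M_k$.

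Next I would establish directness of this coarse splitting. Since a nonzero $\delta G$ can never have top depth exactly $k-1$, no nonzero element of $\mathcal M_{2-k}E_2^{k-1}$ lies in $\delta(\mathcal Q\mathcal M_{k-2})$, and the depth grading separates it from the depth-$0$ part. For $k\leq 1$ the same top-depth argument forces any $G$ with $\delta G\in\mathcal M_k$ to have depth $0$, whence the depth-$1$ term $\tfrac{k-2}{12}E_2 G$ vanishes and $\delta G=0$; this yields $\delta(\mathcal Q\mathcal M_{k-2})\cap\mathcal M_k=0$ and completes the case $k\leq 1$. It then remains to refine the modular summand, i.e.\ to show that $\widetilde{\mathcal M}_k$ is a complement of $\mathcal M_k\cap\big(\delta(\mathcal Q\mathcal M_{k-2})+\mathcal M_{2-k}E_2^{k-1}\big)$ inside $\mathcal M_k$.

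The heart of the argument, and the main obstacle, is this modular refinement, which I would carry out by separating the cusp from the interior. At $\infty$ the operator $\delta$ acts on $q$-expansions by $q^n\mapsto nq^n$, preserving pole order but killing the constant term, so the obstruction to integrating is dual to cusp forms; this is exactly Guerzhoy's analysis and yields the bound $\dim S_k$ on the pole order at $\infty$. At an interior point $\tau_0$ the relevant tool is Bol's identity: $\delta^{k-1}(\mathcal M_{2-k})\subset\mathcal M_k\cap\delta(\mathcal Q\mathcal M_{k-2})$ consists of genuine weight-$k$ modular forms, and $\delta^{k-1}$ raises the interior pole order by $k-1$, so poles of order $\geq k$ can be lowered order by order by subtracting suitable $\delta^{k-1}\phi$, leaving precisely the uniform bound $k-1$; the elliptic points $i,\rho$ must be treated as constrained special cases. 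To make these local reductions global and to obtain directness simultaneously, I would fix a finite set of pole orbits together with pole bounds, pass to the resulting finite-dimensional spaces of forms, and compare dimensions via the valence formula, checking that the maps induced by $\delta$ and by $\delta^{k-1}$ have exactly the expected rank (equivalently, that the residue pairing between weight $k$ and weight $2-k$ forms is nondegenerate on the relevant principal parts), so that $\widetilde{\mathcal M}_k$ injects as a transversal. The delicate points I expect to fight with are the global consistency of the orbit-by-orbit pole reduction under the quasimodularity constraint and the exact bookkeeping at the elliptic points — precisely the features absent from Guerzhoy's weakly holomorphic setting.
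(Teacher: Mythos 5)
Your skeleton matches the paper's proof quite closely: your depth-reduction with the blocked depth $k-1$ is exactly the computation of $\operatorname{coker}(\pi_k)$ in Theorem \ref{thm:cokerdelta}.(ii) (the paper phrases it via the coefficient functions with respect to $X_\gamma$ rather than your Serre-derivative formula, but the mechanism --- the vanishing of the top-depth coefficient $\frac{k-2-p}{12}$ precisely at $p=k-2$ --- is the same), and your pole-order reduction at interior points and at the cusp is the surjectivity half of Theorem \ref{thm:splitting}, which the paper implements through the explicit monomials $u_P^{v_P(f)+k-1}\Delta^{n_\infty}E_4^aE_6^b$ of Lemma \ref{lem:construction}. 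The ``global consistency'' issue you fear does not arise: each correction $\delta^{k-1}(g)$ is holomorphic away from the point being treated (and the cusp, which is handled last by a $g'$ holomorphic on $X\setminus\{\infty\}$), and the congruence constraints at the elliptic points are absorbed by divisibility coming from the valence formula, exactly as you anticipate.

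The genuine gap is in your directness argument for $k\geq 2$. Your top-depth argument correctly shows that no derivative has top depth exactly $k-1$, and it settles $k\leq 1$; but for $k\geq 2$ it only shows that a $G\in\mathcal{Q}\mathcal{M}_{k-2}$ with $\delta(G)\in\mathcal{M}_k$ must have top depth exactly $k-2$ --- it does not identify \emph{which} modular forms arise as such derivatives. You then work exclusively with the easy inclusion $\delta^{k-1}(\mathcal{M}_{2-k})\subset \mathcal{M}_k\cap\delta(\mathcal{Q}\mathcal{M}_{k-2})$ furnished by Bol's identity, but directness of the asserted decomposition needs the \emph{reverse} inclusion: without knowing $\mathcal{M}_k\cap\delta(\mathcal{Q}\mathcal{M}_{k-2})=\delta^{k-1}(\mathcal{M}_{2-k})$, proving $\widetilde{\mathcal{M}}_k\cap\delta^{k-1}(\mathcal{M}_{2-k})=0$ (however you arrange it) still leaves open that the derivative of some depth-$(k-2)$ quasimodular form is a nonzero element of $\widetilde{\mathcal{M}}_k$. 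This missing step is Theorem \ref{thm:cokerdelta}.(i) in the paper, proved by a recursion on the coefficient functions: writing $f=\delta(g)$ with $f$ modular, equation \eqref{eqn:differentiation} forces $p=k-2$ with $g_{k-2}\in\mathcal{M}_{2-k}$ and $\delta(g_r)=-\frac{k-r-1}{12}\,g_{r-1}$ for $1\leq r\leq p$, whence $g=g_0$ is a scalar multiple of $\delta^{k-2}(g_{k-2})$ and $f\in\delta^{k-1}(\mathcal{M}_{2-k})$. Your own formula for $\delta(h\,E_2^j)$ would support an analogous recursion in the $E_2$-basis, but you never run it, and the $E_4$ cross-terms make it messier than the $X_\gamma$-coefficient version. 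A secondary weakness: your plan for the injectivity of $\widetilde{\mathcal{M}}_k\rightarrow\mathcal{B}_k$ (truncated dimension counts, nondegeneracy of a residue pairing) is left entirely unspecified, whereas the paper's argument is short and direct --- if $f=\delta^{k-1}(g)\in\widetilde{\mathcal{M}}_k$, the bound $v_P(f)\geq 1-k$ forces $v_P(g)\geq 0$ at interior points, the bound $v_\infty(f)\geq-\dim S_k$ makes $v_\infty(g)<0$ contradict the valence formula, so $g$ is holomorphic of weight $2-k\leq 0$, hence constant, and $f=0$. Your route could likely be completed, but as written both the exact kernel identification and the transversality are asserted rather than proved.
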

The analogous statement for holomorphic quasimodular forms can be deduced from \cite[Theorem 4.2]{Royer}. Also, the conjunction of Theorems \ref{thm:intro2} and \ref{thm:intro3} implies the independence result of Paşol--Zudilin mentioned above; see Corollary \ref{cor:PasolZudilin} below. Similarly, applying Theorem \ref{thm:intro2} (or rather its generalization, Theorem \ref{thm:complement}) to the family of holomorphic Eisenstein series $E_{2k}$, including $E_0:=-1$, gives a new proof of Kozlov's result, \cite[Theorem 1]{Kozlov}.

The key step in the proof of Theorem \ref{thm:intro3} is to establish that the subspace $\widetilde{\mathcal{M}}_k\subset \mathcal{M}_k$ splits the \emph{Bol space}, \cite{Bol},
\[
\mathcal{B}_k:=\mathcal{M}_k/\delta^{k-1}(\mathcal{M}_{2-k}).
\]
That is, the natural map $\varphi_k: \widetilde{\mathcal{M}}_k\rightarrow \mathcal{B}_k$ is an isomorphism. This generalizes a result of Guerzhoy in the weakly holomorphic case; see Theorem \ref{thm:splitting} below. We also obtain dimension formulas for the restricted Bol space $B_k(\ast S) \subset \mathcal{B}_k$, where possible poles are confined to a finite subset $S$ of the level one modular curve $X$. The precise result is as follows.
\begin{theorem}[see Theorem \ref{thm:dimension} below] \label{thm:intro4}
	Let $S\subset X$ be a finite subset with $\infty \in S$, and let $k\geq 2$ be an even integer. Then
	\[
	\dim B_k(\ast S)=\dim M_k+\dim S_k+\sum_{P\in S'}w_P(k),
	\]
	where $S':=S\setminus \{\infty\}$ and
	\[
	w_P(k):=\begin{cases}
		2\left\lfloor \frac{k-2}{4}\right\rfloor+1, & \mbox{if }P=[i],\\
		2\left\lfloor \frac{k-2}{6}\right\rfloor+1, &  \mbox{if }P=[e^{2\pi i/3}],\\
		k-1, &\mbox{else.}
	\end{cases}
	\]
\end{theorem}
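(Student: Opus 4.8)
The plan is to reduce the computation to a Riemann--Roch count on the genus-zero curve $X$, using the splitting of Theorem \ref{thm:splitting} as the conceptual input. Write $\mathcal{M}_k(\ast S)\subset\mathcal{M}_k$ for the weight-$k$ meromorphic modular forms that are holomorphic on $X\setminus S$, and set $\widetilde{\mathcal{M}}_k(\ast S):=\widetilde{\mathcal{M}}_k\cap\mathcal{M}_k(\ast S)$. Since $B_k(\ast S)$ is the image of $\mathcal{M}_k(\ast S)$ in $\mathcal{B}_k$, the first step is to upgrade Theorem \ref{thm:splitting} to a \emph{restricted} splitting, i.e.\ to show that $\varphi_k$ restricts to an isomorphism
\[
\widetilde{\mathcal{M}}_k(\ast S)\;\xrightarrow{\ \sim\ }\;B_k(\ast S).
\]
The inclusion $\widetilde{\mathcal{M}}_k(\ast S)\subseteq\varphi_k^{-1}(B_k(\ast S))$ is immediate. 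For the reverse inclusion I would run the pole-reduction procedure underlying Theorem \ref{thm:splitting} and observe that it is \emph{local at the poles}: starting from a representative $h\in\mathcal{M}_k(\ast S)$, one subtracts terms $\delta^{k-1}(g)$ with $g\in\mathcal{M}_{2-k}$ in order to lower the pole orders to the bounds defining $\widetilde{\mathcal{M}}_k$, and each such $g$ can be chosen with poles only at the relevant point of $S$. Because $\delta^{k-1}$ never creates new pole \emph{locations}, the unique resulting representative again lies in $\mathcal{M}_k(\ast S)$, giving $\dim B_k(\ast S)=\dim\widetilde{\mathcal{M}}_k(\ast S)$.

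For the second step I would identify $\mathcal{M}_k$ with the meromorphic sections of the line bundle $\mathcal{L}_k$ on $X\cong\mathbb{P}^1$ whose holomorphic sections are $M_k$; concretely $f\mapsto f\,(d\tau)^{k/2}$ realizes this as $\Omega_X^{k/2}\bigl(\tfrac{k}{2}[\infty]+\lfloor k/4\rfloor[i]+\lfloor k/3\rfloor[e^{2\pi i/3}]\bigr)$, a bundle of degree $\deg\mathcal{L}_k=\dim M_k-1$ by the valence formula. Under this identification $\widetilde{\mathcal{M}}_k(\ast S)$ becomes $H^0(X,\mathcal{L}_k(D))$, where $D$ records the allowed poles: order $\dim S_k$ at $[\infty]$ and analytic order $k-1$ at each $P\in S'$. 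Since $X$ has genus zero and every twist obtained by adjoining one point at a time has degree $\geq -1$, one has $H^1=0$ and $\dim H^0=\deg+1$ throughout, so the count is additive over $S$:
\[
\dim\widetilde{\mathcal{M}}_k(\ast S)=\deg\mathcal{L}_k(D)+1=\dim M_k+\dim S_k+\sum_{P\in S'}w_P(k),
\]
where $w_P(k)$ denotes the local degree that the pole bound at $P$ contributes to $\mathcal{L}_k(D)$.

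It then remains to evaluate each $w_P(k)$, and this local analysis at the elliptic points is the heart of the matter. At a non-elliptic $P\in\mathbb{H}$ the quotient map is unramified, analytic order agrees with the order on $X$, and allowing pole order $\leq k-1$ contributes $w_P(k)=k-1$. At $P=[i]$ the stabilizer acts on a local coordinate $t$ by $t\mapsto -t$, forcing $\operatorname{ord}_i(f)\equiv k/2\pmod 2$; passing to the coordinate $u\sim t^2$ on $X$, a pole of analytic order $k-1$ amounts to lowering the order of $f\,(d\tau)^{k/2}$ as a section of $\Omega_X^{k/2}$ by the largest integer compatible with this congruence, and a short case analysis on $k\bmod 4$ gives $w_{[i]}(k)=2\lfloor(k-2)/4\rfloor+1$. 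The order-three point $[e^{2\pi i/3}]$ is handled identically with the congruence taken modulo $3$, yielding $w_{[e^{2\pi i/3}]}(k)=2\lfloor(k-2)/6\rfloor+1$, while the contribution $\dim S_k$ at the cusp is exactly the weakly holomorphic case already governed by Theorem \ref{thm:splitting}. I expect the main obstacle to be twofold: justifying the restricted splitting, i.e.\ that the reduction of Theorem \ref{thm:splitting} can be carried out without introducing poles outside $S$, and the bookkeeping at the elliptic points, where the ramification of $\mathbb{H}\to X$ together with the parity/congruence constraints on $\operatorname{ord}_P(f)$ is precisely what produces the floor functions in $w_P(k)$.
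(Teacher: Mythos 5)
Your proposal is correct, and it reaches the formula by a genuinely different computational route than the paper. The paper proves Theorem \ref{thm:dimension} by observing that $\widetilde{M}_k(\ast S)=M_k(D_S)$ for the divisor $D_S=\dim S_k\cdot(\infty)+\sum_{P\in S'}\frac{k-1}{h_P}\cdot(P)$, and then invoking Proposition \ref{prop:isomorphism} via equation \eqref{eqn:dimension}: multiplication by a meromorphic form $g_{D_S}$ with $\operatorname{div}(g_{D_S})=D_S$ identifies $M_k(D_S)$ with the space of \emph{holomorphic} forms $M_{k+12\deg D_S}$ of a single shifted weight. The floor functions $w_P(k)$ then emerge from a four-case comparison of classical dimension formulas (e.g.\ showing $\dim M_{7k-6}=\dim M_k+w_{[i]}(k)$), checked directly for $2\leq k\leq 12$ and propagated by invariance under $k\mapsto k+12$. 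You instead compute $\dim H^0$ of a twist of the line bundle $\mathcal{L}_k$ on $X\cong\mathbf{P}^1$ and extract each $w_P(k)$ locally from the congruence constraints on vanishing orders at the elliptic points ($v_{[i]}(f)\equiv k/2\ \mathrm{mod}\ 2$ and $v_{[e^{2\pi i/3}]}(f)\equiv k\ \mathrm{mod}\ 3$); I have checked that your counts of admissible negative orders reproduce $2\lfloor(k-2)/4\rfloor+1$ and $2\lfloor(k-2)/6\rfloor+1$, and that $\deg\mathcal{L}_k=\dim M_k-1$ with all intermediate twists of degree $\geq -1$, so the additivity $\dim H^0=\deg+1$ is legitimate. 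The two arguments are cousins rather than strangers --- equation \eqref{eqn:dimension} is exactly the elementary modular-forms avatar of your line-bundle identification --- but your version makes the \emph{local} origin of the floor functions transparent, whereas the paper's weight-shifting trick avoids sheaf cohomology at the cost of an opaque case check. One further point in your favor: the paper states Theorem \ref{thm:dimension} for $\widetilde{M}_k(\ast S)$ and relegates the identification with $B_k(\ast S)$ to the remark following it, without proof; your explicit justification of the restricted splitting --- that the forms $g$ supplied by Lemma \ref{lem:construction} have poles only at the point being reduced and at $\infty\in S$, and that $\delta^{k-1}$ creates no new pole locations, so the reduction of Theorem \ref{thm:splitting} never leaves $M_k(\ast S)$ --- is precisely the argument needed to make that remark rigorous, and is required for the statement about $B_k(\ast S)$ as literally formulated in Theorem \ref{thm:intro4}.
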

The case $S=\{\infty\}$ is essentially \cite[Theorem 1]{Guerzhoy}. We would like to point out that the space $B_k(\ast \{\infty\})$ admits an interpretation in terms of algebraic de Rham cohomology (with coefficients) of the moduli stack of elliptic curves, \cite{BrownHain}. It would be interesting to find a similar interpretation of the spaces $B_k(\ast S)$, but this is beyond the scope of this paper.\footnote{After this paper was submitted for publication, the author has been informed by Tiago J. Fonseca that it is possible to deduce Theorem \ref{thm:intro4} from the Eichler--Shimura theorem together with general properties of algebraic de Rham cohomology with coefficients. The details will appear in forthcoming work of Francis Brown and Fonseca.}

\subsection*{Contents}
Sections \ref{sec:2} and \ref{sec:3} contain reminders on meromorphic modular and quasimodular forms respectively. In Section \ref{sec:4}, we construct iterated primitives of meromorphic quasimodular forms. We then prove Theorems \ref{thm:intro1} and \ref{thm:intro2} in Section \ref{sec:5}, and finally prove Theorems \ref{thm:intro3} and \ref{thm:intro4} in Section \ref{sec:6}.

\section{Reminder on meromorphic modular forms} \label{sec:2}

Let $\mathfrak{H}=\{\tau\in \mathbb C \, : \, \operatorname{Im}(\tau)>0\}$ be the upper half-plane. Recall that a meromorphic function $f$ on $\mathfrak{H}$ is said to be \emph{weakly modular of weight $k\in \mathbb Z$}, if $f[\gamma]_k=f$ for all $\gamma \in \operatorname{SL}_2(\mathbb Z)$, where
\[
f[\gamma]_k(\tau):=(c\tau+d)^{-k}f\left(\frac{a\tau+b}{c\tau+d}\right), \qquad \mbox{for }\gamma=\begin{pmatrix}a&b\\c&d\end{pmatrix}.
\]
Each weakly modular function has a Fourier expansion $f(\tau)=\sum_{n=-\infty}^\infty a_nq^n$, where $q=e^{2\pi i\tau}$. We say that $f$ is \emph{meromorphic at $\infty$}, if there exists $N\in \mathbb Z$ such that $a_n=0$ for all $n<N$. The smallest such $N$ is denoted by $v_\infty(f)$ with the convention that $v_\infty(f)=\infty$ if no such $N$ exists. This happens, of course, if and only if $f$ is the zero function. If $v_\infty(f)\geq 0$, then we say that $f$ is \emph{holomorphic at $\infty$}.
\begin{definition}
	A \emph{meromorphic modular form of weight $k \in \mathbb Z$} is a meromorphic function $f$ on $\mathfrak{H}$ which is both weakly modular of weight $k$ and meromorphic at $\infty$.
	
	We let $\mathcal{M}_k$ denote the $\mathbb C$-vector space of meromorphic modular forms of weight $k$, and let $\mathcal{M}:=\bigoplus_{k\in \mathbb Z}\mathcal{M}_k$ denote the corresponding graded $\mathbb C$-algebra.
\end{definition}
Now let $\mathfrak{H}^*:=\mathfrak{H}\cup \mathbf P^1(\mathbb{Q})$ be the extended upper half-plane. The group $\operatorname{SL}_2(\mathbb Z)$ acts on $\mathfrak{H}^*$ in the usual way via M\"obius transformations and the quotient $X:=\operatorname{SL}_2(\mathbb Z)\backslash\mathfrak{H}^*$ is known as the \emph{modular curve of level one}. It has a natural structure of compact Riemann surface and exactly one cusp $\infty \in X$.

Given a meromorphic modular form $f$ and a point $p\in \mathfrak{H}$, the vanishing order $v_p(f)$ of $f$ at $p$ depends only on the class $[p] \in X$, and we shall write $v_{[p]}(f)$ instead of $v_p(f)$. Therefore, if $f\neq 0$, we can associate to $f$ its \emph{divisor}
\[
\operatorname{div}(f)=\sum_{P\in X}\frac{v_P(f)}{h_P}\cdot (P), \qquad \mbox{where} \qquad h_P:=
\begin{cases}
	2, & \mbox{if }P=[i],\\
	3, & \mbox{if }P=[e^{2\pi i/3}],\\
	1, & \mbox{else.}
\end{cases}
\]
Note that the valence formula implies that $\deg \operatorname{div}(f)=k/12$. Conversely, let $D=\sum_{P\in X}\frac{n_P}{h_P}\cdot (P)$ be a divisor on $X$.
\begin{proposition} \label{prop:isomorphism}
	There exists a meromorphic modular form $g_D \in \mathcal{M}_{12\deg D}$ such that $\operatorname{div}(g_D)=D$.
\end{proposition}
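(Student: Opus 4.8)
The plan is to build $g_D$ explicitly as a product of the standard generators $E_4$, $E_6$, $\Delta$ together with translates $j-c$ of the modular invariant $j:=E_4^3/\Delta$, each of which realizes an elementary divisor. First I would record the divisors of the building blocks: writing $\rho:=e^{2\pi i/3}$, one has $\operatorname{div}(E_4)=\tfrac13([\rho])$, $\operatorname{div}(E_6)=\tfrac12([i])$, and $\operatorname{div}(\Delta)=(\infty)$. These are standard and follow from the valence formula, since $E_4$ (weight $4$, hence $\deg\operatorname{div}=\tfrac13$) can only vanish at $[\rho]$, $E_6$ (weight $6$, $\deg\operatorname{div}=\tfrac12$) only at $[i]$, and $\Delta$ only at the cusp. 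The crucial observation is that, because $X$ has genus zero and $j\colon X\to\mathbf P^1(\mathbb C)$ is an isomorphism of Riemann surfaces, for any $c\in\mathbb C\setminus\{0,1728\}$ the weight-zero form $j-c$ has divisor $(P_c)-(\infty)$, where $P_c\in X$ is the unique point with $j(P_c)=c$; as $c\neq 0,1728$, the point $P_c$ is neither elliptic nor the cusp, so the zero at $P_c$ is simple and there are no others.

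Then I would write $D=\frac{n_\rho}{3}([\rho])+\frac{n_i}{2}([i])+n_\infty(\infty)+\sum_P n_P(P)$, the last sum running over the finitely many $P\in\operatorname{supp}(D)\setminus\{[\rho],[i],\infty\}$ (each non-elliptic, so $h_P=1$ and $n_P\in\mathbb Z$), and set
\[
g_D:=\Delta^{m}\,E_4^{\,n_\rho}\,E_6^{\,n_i}\prod_P\big(j-j(P)\big)^{n_P},\qquad m:=n_\infty+\sum_P n_P.
\]
Since the divisor of a product is the sum of the divisors of its factors, $\operatorname{div}(g_D)$ equals $\frac{n_\rho}{3}([\rho])+\frac{n_i}{2}([i])+\sum_P n_P(P)+\big(m-\sum_P n_P\big)(\infty)$, and the choice of $m$ makes the coefficient of $(\infty)$ equal to $n_\infty$. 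Here I use that $j$ is injective on $X$, so each factor $j-j(P)$ vanishes only at $P$ (with a simple pole at $\infty$), and that $E_4,E_6,\Delta$ vanish only at $[\rho],[i],\infty$ respectively, so no factor contributes an unintended zero. Hence $\operatorname{div}(g_D)=D$.

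Finally, the weight of $g_D$ is $12m+4n_\rho+6n_i=12n_\infty+12\sum_P n_P+4n_\rho+6n_i$, which equals $12\deg D=12\big(\tfrac{n_\rho}{3}+\tfrac{n_i}{2}+n_\infty+\sum_P n_P\big)$, as required (this is of course also forced by the valence formula once $\operatorname{div}(g_D)=D$ is established). I do not expect a genuine obstacle here: the only points requiring care are the bookkeeping at the cusp, which is precisely what the correcting factor $\Delta^{m}$ handles, and the determination of $\operatorname{div}(j-c)$, which is where the genus-zero geometry of $X$ enters — equivalently, the fact that every degree-zero divisor on $X$ is principal. Note also that $12\deg D=4n_\rho+6n_i+12n_\infty+12\sum_P n_P$ is automatically an integer, so no hypothesis on $D$ beyond integrality of the $n_P$ is needed.
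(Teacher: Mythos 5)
Your proof is correct and is essentially the paper's own argument: the paper uses the building blocks $u_P\in\{E_4,E_6,\Delta,(j-j(P))\Delta\}$ with $\operatorname{div}(u_P)=\frac{1}{h_P}(P)$ and sets $g_D=\prod_P u_P^{n_P}$, which after expanding is literally your $\Delta^{m}E_4^{n_\rho}E_6^{n_i}\prod_P(j-j(P))^{n_P}$ with $m=n_\infty+\sum_P n_P$. The only cosmetic difference is that the paper absorbs your correcting factor $\Delta^m$ into each block $(j-j(P))\Delta$ so that every building block is holomorphic, while you keep the weight-zero factors $j-j(P)$ and fix the coefficient at $\infty$ at the end.
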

\begin{proof}
	The function
	\[
	u_P(\tau)=
	\begin{cases}
		E_4(\tau), & \mbox{if }P=[e^{2\pi i/3}],\\
		E_6(\tau), & \mbox{if }P=[i]\\
		\Delta(\tau), & \mbox{if }P=\infty,\\
		(j(\tau)-j(P))\Delta(\tau), & \mbox{else,}
	\end{cases}
	\]
	where $j(\tau)=E_4(\tau)^3/\Delta(\tau)$, is a holomorphic modular form of weight $12/h_P$ such that $\operatorname{div}(u_P)=\frac{1}{h_P}\cdot (P)$. Therefore, $g_D=\prod_{P\in X}u_P^{n_P} \in \mathcal{M}_{12\deg D}$ has the requisite property.
\end{proof}
\begin{remark} \label{rmk:isomorphism}
	Proposition \ref{prop:isomorphism} has the following consequences. First of all, let
	\[
	M_k(D)=\{f\in \mathcal{M}_k\, : \, v_P(f)\geq -n_P\}.
	\]
	If $D=0$, then $M_k(D)=M_k$, the space of holomorphic modular forms of weight $k$. Then Proposition \ref{prop:isomorphism} implies that $M_k(D)=g_D^{-1}\cdot M_{k+12\deg D}$, so that in particular
	\begin{equation} \label{eqn:dimension}
		\dim M_k(D)=\dim M_{k+12\deg D}.
	\end{equation}
	Secondly, it follows from Proposition \ref{prop:isomorphism} that every $f\in \mathcal{M}_k$ can be written as a quotient $f=g/h$, where $g,h$ are holomorphic modular forms, with $h\neq 0$, of weights $k_g$ and $k_h$, respectively, such that $k=k_g-k_h$. Since every holomorphic modular form can be written as a polynomial in $E_4$ and $E_6$, it follows that $\mathcal{M}\subset \mathbb C(E_4,E_6)$ is the subspace spanned by homogeneous rational functions in $E_4$ and $E_6$.
\end{remark}
The following lemma gives a construction of meromorphic modular forms with certain prescribed poles that will be needed later.
\begin{lemma} \label{lem:construction}
	Let $k\geq 2$ be an even integer, $f \in \mathcal{M}_k$ be a meromorphic modular form, and $P\in X\setminus \{\infty\}$.
	\begin{itemize}
		\item[(i)] If $v_P(f)\leq -k$, then there exists $g\in \mathcal{M}_{2-k}$ such that $v_P(g)=v_P(f)+k-1$ and $v_Q(g) \geq 0$, for all $Q\in X\setminus \{P,\infty\}$.
		\item[(ii)]
		If $v_\infty(f)\leq -\dim S_k-1$, then there exists $g\in \mathcal{M}_{2-k}$ such that $v_\infty(g)=v_\infty(f)$ and $v_P(g)\geq 0$, for all $P\in X\setminus \{\infty\}$.
	\end{itemize}
\end{lemma}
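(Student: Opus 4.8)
The plan is to treat both parts as existence statements for meromorphic modular forms of weight $2-k$ with prescribed local behaviour, and to use the two tools packaged in Remark \ref{rmk:isomorphism}: Proposition \ref{prop:isomorphism}, which realizes an arbitrary divisor by a form of the corresponding weight, and the dimension formula \eqref{eqn:dimension}. Part (i) is cleanest by a direct construction, whereas part (ii) is best handled by a dimension count, since there the exact pole order at $\infty$ must be hit and the hypothesis is sharply calibrated.

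For (i) I would argue directly with Proposition \ref{prop:isomorphism}. Set $m:=v_P(f)+k-1$, which satisfies $m\le -1$ by hypothesis, and look for $g$ with $\operatorname{div}(g)=\frac{m}{h_P}(P)+\frac{a}{2}([i])+\frac{b}{3}([e^{2\pi i/3}])+n_\infty(\infty)$ for suitable integers $a,b\ge 0$ and $n_\infty\in\mathbb Z$. Such a $g\in\mathcal M_{2-k}$ exists by Proposition \ref{prop:isomorphism} as soon as $\deg\operatorname{div}(g)=(2-k)/12$, and since then $v_P(g)=m$ and $v_Q(g)\ge0$ for all other finite $Q$, it has the required properties (there is no constraint at $\infty$, so $n_\infty$ is free). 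The only thing to verify is that the degree can be matched: because $k$ is even, $(2-k)/12$ lies in $\tfrac16\mathbb Z$, and $m/h_P$ does too since $h_P\in\{1,2,3\}$; one then realizes the remaining multiple of $\tfrac16$ as $\frac a2+\frac b3$ with $a,b\ge0$ after absorbing a suitable integer into $n_\infty$ to make the required value a large non-negative multiple of $\tfrac16$. (If $P\in\{[i],[e^{2\pi i/3}]\}$ one omits the corresponding correction term and adjusts using the other elliptic point.)

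For (ii) I would instead count dimensions. Writing $\ell:=-v_\infty(f)\ge \dim S_k+1$, consider the spaces $M_{2-k}(\ell\cdot(\infty))$ and $M_{2-k}((\ell-1)\cdot(\infty))$ of weight-$(2-k)$ forms that are holomorphic on $\mathfrak H$ and have a pole of order at most $\ell$, respectively $\ell-1$, at the cusp. Any $g$ lying in the first space but not the second automatically satisfies $v_\infty(g)=-\ell=v_\infty(f)$ together with $v_P(g)\ge0$ for all $P\ne\infty$, so it suffices to show the strict inequality $\dim M_{2-k}(\ell\cdot(\infty))>\dim M_{2-k}((\ell-1)\cdot(\infty))$. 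By the dimension formula \eqref{eqn:dimension} this is equivalent to $\dim M_w>\dim M_{w-12}$ with $w:=2-k+12\ell$.

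The main obstacle—and the point where the precise shape of the hypothesis is used—is verifying this strict inequality. For even $w$ one has $\dim M_w-\dim M_{w-12}\in\{0,1\}$, with value $0$ exactly when $w<0$ or $w=2$ (this follows from $S_w=\Delta\cdot M_{w-12}$ together with the count of Eisenstein series). I would then check that the hypothesis $\ell\ge\dim S_k+1$ rules out both degenerate cases: using $\dim S_k+1=\dim M_k$ for $k\ge4$ and the explicit formula for $\dim M_k$, a short case analysis on $k\bmod 12$ shows that $\ell\ge\dim S_k+1$ forces $w\ge0$ and $w\ne2$, whence $\dim M_w-\dim M_{w-12}=1$. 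The threshold $\dim S_k+1$ is exactly calibrated for this, which is what makes the bound in the statement sharp. The weight $k=2$ (where $2-k=0$) is handled separately and more simply, since each $j^\ell$ is a weight-$0$ form holomorphic on $\mathfrak H$ with $v_\infty(j^\ell)=-\ell$.
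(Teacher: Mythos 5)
Your treatment of the generic case of (i) and all of part (ii) is sound. In (ii) your dimension count is a repackaging of the paper's direct construction: the paper takes $g=\Delta^{v_\infty(f)}E_4^aE_6^b$ with $4a+6b=2-k-12v_\infty(f)$ and observes that the hypothesis makes the right-hand side a non-negative even integer $\neq 2$; your criterion $\dim M_w>\dim M_{w-12}$ for $w:=2-k-12v_\infty(f)$, which holds precisely for even $w\geq 0$ with $w\neq 2$, is the same calibration in Riemann--Roch clothing, and your case analysis on $k\bmod 12$ (including the unnecessary-but-harmless separate treatment of $k=2$) checks out.

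The genuine gap is the elliptic case of (i), which you dispose of in a parenthetical, and which is exactly where the arithmetic content of the lemma sits. Take $P=[i]$ and $m:=v_P(f)+k-1$. Matching degrees for the divisor $\frac{m}{2}(P)+\frac{b}{3}([e^{2\pi i/3}])+n_\infty(\infty)$ forces $4b=2-k-6m-12n_\infty$, so you need the congruence $2-k-6m\equiv 0\pmod 4$; since varying $n_\infty$ changes nothing modulo $4$, this is an honest obstruction that cannot be removed by ``absorbing an integer into $n_\infty$'' or ``adjusting using the other elliptic point'', and it genuinely fails for some pairs $(k,m)$ with $k$ even (e.g.\ $k=4$ and $m$ even gives $2-k-6m\equiv 2\pmod 4$). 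It holds in the situation of the lemma only because $f$ exists: multiplying the valence formula for $f$ by $12$ and reducing modulo $4$ gives $6v_{[i]}(f)\equiv k\pmod 4$, i.e.\ $v_{[i]}(f)\equiv k/2\pmod 2$, which is equivalent to the required congruence for $m=v_{[i]}(f)+k-1$. Likewise for $P=[e^{2\pi i/3}]$ one needs $2-k-4m\equiv 0\pmod 3$, which follows from the valence formula modulo $3$ (namely $v_P(f)\equiv k\pmod 3$). This is precisely the step the paper's proof flags (``divisible by $4$, as a consequence of the valence formula''); note that otherwise your argument for (i) uses the hypothesis on $f$ only through the value of $m$, so some appeal to the valence formula is unavoidable here. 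The gap is fixable by inserting the congruence check above, after which your construction coincides with the paper's explicit monomials $u_P^{m}\Delta^{n_\infty}E_4^aE_6^b$.
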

\begin{proof}
	\begin{itemize}
		\item[(i)]
		\textit{Case 1: $P \notin\{[i],[e^{2\pi i/3}]\}$.}
		There exist $a,b,n_\infty \in \mathbb Z$ with $a,b\geq 0$ such that
		\[
		12n_\infty+4a+6b=2-k-12(v_P(f)+k-1),
		\]
		and $g=u_P^{v_P(f)+k-1}\Delta^{n_\infty}E_4^aE_6^b$ satisfies the conditions in (i).
		\bigskip
		
		\noindent
		\textit{Case 2: $P=[i]$.}
		We claim that there exist $a,n_\infty \in \mathbb Z$ with $a\geq 0$ such that
		\begin{equation} \label{eqn:linearequation}
			12n_\infty+4a=2-k-6(v_P(f)+k-1).
		\end{equation}
		Indeed, the right hand side of \eqref{eqn:linearequation} is divisible by $4$, as a consequence of the valence formula. Hence $a,n_\infty$ as above exist and $g=\Delta^{n_\infty}E_4^aE_6^{v_P(f)+k-1}$ satisfies the conditions in (i).
		\bigskip
		
		\noindent
		\textit{Case 3: $P=[e^{2\pi i/3}]$.}
		Similarly to before, one shows that there exist $b,n_\infty \in \mathbb Z$ with $b\geq 0$ such that
		\begin{equation} \label{eqn:linearequation2}
			12n_\infty+6b=2-k-4(v_P(f)+k-1).
		\end{equation}
		Then $g=\Delta^{n_\infty}E_4^{v_P(f)+k-1}E_6^b$ satisfies the conditions in (i).
		\item[(ii)]
		Similarly to the proof of (i), we claim that there exist $a,b\geq 0$ such that
		\begin{equation} \label{eqn:linearequation3}
			4a+6b=2-k-12v_\infty(f).
		\end{equation}
		Indeed, using our assumption on $v_\infty(f)$ and the well-known dimension formula for $\dim S_k$, one verifies that the right hand side of \eqref{eqn:linearequation3} is a non-negative integer $\neq 2$, hence $a,b$ as above exist. Then $g=\Delta^{v_\infty(f)}E_4^aE_6^b$ satisfies the conditions of (ii).
	\end{itemize}
\end{proof}

\section{Meromorphic quasimodular forms} \label{sec:3}

The notion of holomorphic quasimodular form was introduced in \cite{KanekoZagier}; see also \cite{Royer} for an expository account. The generalization to the meromorphic case is straightforward.

Let $k,p$ be integers with $p\geq 0$. We say that a meromorphic function $f$ on $\mathfrak{H}$ is \emph{weakly quasimodular\footnote{This terminology is not standard.} of weight $k$ and depth $p$} if there exist meromorphic functions $f_0,\ldots,f_p$ on $\mathfrak{H}$ such that
\begin{equation} \label{eqn:weaklyquasimodular}
	f[\gamma]_k(\tau)=\sum_{r=0}^p f_r(\tau)\cdot X_\gamma(\tau)^r, \qquad\mbox{where } X_\gamma(\tau)=\frac{12}{2\pi i}\frac{c}{c\tau+d},
\end{equation}
for all $\gamma \in \operatorname{SL}_2(\mathbb Z)$. The functions $f_0,\ldots,f_p$ are then uniquely determined by $f$ and are referred to as the \emph{coefficient functions} of $f$. Note that $f_0=f$ and, more generally, that each $f_r$ is itself weakly quasimodular, of weight $k-2r$ and depth $p-r$. More precisely,
\begin{equation} \label{eqn:weaklyquasimodularcoefficient}
	f_r[\gamma]_{k-2r}=\sum_{s=r}^p\binom{s}{r}f_s\cdot X_\gamma^{s-r},
\end{equation}
for all $\gamma \in \operatorname{SL}_2(\mathbb Z)$. In particular, $f_r(\tau+1)=f_r(\tau)$, so that each $f_r$ has a Fourier expansion. 
\begin{definition} \label{dfn:quasimodular}
	A \emph{meromorphic quasimodular form of weight $k\in \mathbb Z$ and depth $p\geq 0$} is a meromorphic function $f$ on $\mathfrak{H}$ which satisfies the following two conditions:
	\begin{itemize}
		\item[(i)]
		$f$ is weakly quasimodular of weight $k$ and depth $p$;
		\item[(ii)]
		The functions $f_0,\ldots,f_p$ are meromorphic at $\infty$
	\end{itemize}
	We let $\mathcal{Q}\mathcal{M}_k^{\leq p}$ denote the $\mathbb C$-vector space of meromorphic quasimodular forms of weight $k$ and depth $p$. We also define
	\[
	\mathcal{Q}\mathcal{M}_k=\bigcup_{p\geq 0}\mathcal{Q}\mathcal{M}_k^{\leq p}, \qquad \mbox{respectively} \qquad \mathcal{Q}\mathcal{M}=\bigoplus_{k\in \mathbb Z}\mathcal{Q}\mathcal{M}_k,
	\]
	to be the $\mathbb C$-vector space of meromorphic quasimodular forms of weight $k$, and the graded $\mathbb C$-algebra of all meromorphic quasimodular forms, respectively.
\end{definition}
Now let $f \in \mathcal{Q}\mathcal{M}_k^{\leq p}$ with coefficient functions $f_0,\ldots,f_p$, and let $\delta=q\frac{d}{dq}=\frac{1}{2\pi i}\frac{d}{d\tau}$ denote the $q$-derivative. Applying $\delta$ to both sides of \eqref{eqn:weaklyquasimodular}, we deduce that
\begin{equation} \label{eqn:differentiation}
	\delta(f)[\gamma]_{k+2}=\sum_{r=0}^{p+1}\left(\delta(f_r)+\frac{k-r+1}{12}f_{r-1}\right) X_\gamma^r, \qquad f_{-1}=f_{p+1}:=0,
\end{equation}
for all $\gamma \in \operatorname{SL}_2(\mathbb Z)$. In particular, there is a well-defined map
\[
\delta: \mathcal{Q}\mathcal{M}_k^{\leq p}\rightarrow \mathcal{Q}\mathcal{M}_{k+2}^{\leq p+1},
\]
for all $k,p \in \mathbb Z$ with $p\geq 0$.
\begin{example}
	The holomorphic Eisenstein series $E_2(\tau)$ is a holomorphic quasimodular form of weight $2$ and depth $1$. It satisfies 
	\begin{equation} \label{eqn:E2quasimodular}
		E_2[\gamma]_2=E_2+X_\gamma,
	\end{equation}
	for all $\gamma \in \operatorname{SL}_2(\mathbb Z)$. Every meromorphic quasimodular form can be written uniquely as a polynomial in $E_2$ whose coefficients are meromorphic modular forms. More precisely, we have
	\begin{equation} \label{eqn:polynomialinE2}
		\mathcal{Q}\mathcal{M}_k^{\leq p}=\bigoplus_{0\leq r\leq p}\mathcal{M}_{k-2r}\cdot E_2^r, \qquad \mbox{and} \qquad \mathcal{Q}\mathcal{M}=\mathcal{M}[E_2].
	\end{equation}
	In particular, the depth of $f$ equals its degree in $E_2$.
\end{example}

\section{Iterated primitives of formal Laurent series} \label{sec:4}

The goal of this section is to define iterated primitives of meromorphic quasimodular forms. More generally, we define iterated primitives of formal Laurent series in one variable $q$, which can be specialized to meromorphic quasimodular forms using their Fourier expansion. The case of formal power series is technically simpler and will be dealt with first. The general case requires renormalization techniques which originated in quantum field theory, \cite{ConnesKreimer}; see also \cite{Manchon} for an expository account.

\subsection{The case of formal power series}

Let $\mathcal{Q}\mathcal{M}^{\infty}\subset \mathcal{Q}\mathcal{M}$ denote the $\mathbb C$-subalgebra of meromorphic quasimodular forms which are holomorphic at $\infty$. Mapping a meromorphic quasimodular form to its Fourier expansion induces an embedding of $\mathbb C$-algebras
\[
\mathcal{Q}\mathcal{M}^{\infty} \hookrightarrow \mathbb C[\![q]\!].
\]
Moreover, the $q$-derivative $\delta=q\frac{d}{dq}$ extends to $\mathbb C[\![q]\!]$ in the natural way and every $f=\sum_{n\geq 0}a_nq^n \in \mathbb C[\![q]\!]$ with $a_0=0$ has a $\delta$-primitive in $\mathbb C[\![q]\!]$, namely $\sum_{n\neq 0}n^{-1}a_nq^n$.

In order to deal with the case where $a_0\neq 0$, we introduce a new variable $t$ such that $\delta(t)=1$, and define a $\mathbb C$-algebra by
$\mathcal{A}^{\infty}:=(\mathbb C[\![q]\!])[t]$. One can show that every element $f\in \mathcal{A}^{\infty}$ has a primitive $F\in \mathcal{A}^{\infty}$, which is unique up to adding an element of $\ker(\delta)=\mathbb C$.
\begin{definition} \label{dfn:primitive}
	Define a formal integration map
	\[
	I: \mathcal{A}^{\infty}\rightarrow \mathcal{A}^{\infty}, \qquad f\mapsto F-\operatorname{ev}_0(F),
	\]
	where $F=\sum_{i=0}^nF_i(q)t^i$ is any primitive of $f$, and $\operatorname{ev}_0:\mathcal{A}^{\infty}\rightarrow \mathbb C$ is defined by $\operatorname{ev}_0(F)=F_0(0)$.
\end{definition}
Equivalently, $I(f)$ is defined as the unique primitive of $f$ whose constant term (as a series in $t$ and $q$) vanishes.
\begin{definition} \label{dfn:iteratedprimitive}
	Given $f_1,\ldots,f_n \in \mathcal{A}^{\infty}$, define their \emph{iterated primitive} recursively by
	\[
	I(f_1,\ldots,f_n)=
	\begin{cases}
		I(f_1\cdot I(f_2,\ldots,f_n)), & \mbox{if }n\geq 1, \\
		1, & \mbox{if }n=0.
	\end{cases}
	\]
	The \emph{length} of $I(f_1,\ldots,f_n)$ is defined to be the integer $n$.
\end{definition}
The next proposition states some standard properties of iterated primitives (cf. \cite{Chen}). Since our formal setup is slightly different, we give a proof for the reader's convenience.
\begin{proposition} \label{prop:iteratedprimitiveprops}
	The following statements are true.
	\begin{itemize}
		\item[(i)] For all $f_1,\ldots,f_n \in \mathcal{A}^{\infty}$, we have
		\[
		\delta(I(f_1,\ldots,f_n))=f_1\cdot I(f_2,\ldots,f_n).
		\]
		\item[(ii)] For all integers $m,n\geq 0$, and all $f_1,\ldots,f_{m+n} \in \mathcal{A}^{\infty}$, the \emph{shuffle product formula} holds:
		\begin{equation} \label{eqn:shuffleproduct}
			I(f_1,\ldots,f_m)I(f_{m+1},\ldots,f_{m+n})=\sum_{\sigma\in \Sigma_{m,n}}I(f_{\sigma^{-1}(1)},\ldots,f_{\sigma^{-1}(m+n)}),
		\end{equation}
		where $\Sigma_{m,n}$ denotes the set of all permutations $\sigma: \{1,\ldots,m+n\}\rightarrow \{1,\ldots,m+n\}$ such that $\sigma(1)<\ldots<\sigma(m)$ and $\sigma(m+1)<\ldots<\sigma(m+n)$.
	\end{itemize}
\end{proposition}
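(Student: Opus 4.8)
The plan is to obtain (i) immediately from the definitions, and then to prove the shuffle formula (ii) by induction on $m+n$, using a uniqueness principle for primitives in $\mathcal{A}^{\infty}$. The crucial preliminary observation for (i) is that the integration map $I\colon \mathcal{A}^{\infty}\to \mathcal{A}^{\infty}$ is a section of $\delta$: if $F$ is any primitive of $g$, then $I(g)=F-\operatorname{ev}_0(F)$ differs from $F$ by a constant, so $\delta(I(g))=\delta(F)=g$. Unwinding the recursive definition of the iterated primitive, $I(f_1,\ldots,f_n)=I\bigl(f_1\cdot I(f_2,\ldots,f_n)\bigr)$, and applying $\delta$ then gives $\delta(I(f_1,\ldots,f_n))=f_1\cdot I(f_2,\ldots,f_n)$ with no further work. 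This proves (i).

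For (ii), I would argue by induction on $N=m+n$, relying on the following uniqueness principle: since $\ker(\delta)=\mathbb{C}$ on $\mathcal{A}^{\infty}$, any two elements $G,H\in \mathcal{A}^{\infty}$ with $\delta(G)=\delta(H)$ and $\operatorname{ev}_0(G)=\operatorname{ev}_0(H)$ must coincide. Write $L$ and $R$ for the left- and right-hand sides of \eqref{eqn:shuffleproduct}. The base cases $m=0$ or $n=0$ are immediate: one factor on the left is the empty iterated primitive $1$, while $\Sigma_{m,n}$ reduces to a single (identity) shuffle, so both sides equal the surviving iterated primitive.

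In the inductive step I take $m,n\geq 1$ and verify the two hypotheses of the uniqueness principle. First, every iterated primitive of positive length has vanishing constant term by construction, and $\operatorname{ev}_0$ is a ring homomorphism (it reads off the constant coefficient in $t$ and $q$); hence $L$, a product of two such elements, and $R$, a sum of length $m+n$ iterated primitives, both satisfy $\operatorname{ev}_0(L)=\operatorname{ev}_0(R)=0$. Second, I compute derivatives. The Leibniz rule together with part (i) yields
\[
\delta(L)=f_1\cdot I(f_2,\ldots,f_m)\,I(f_{m+1},\ldots,f_{m+n})+f_{m+1}\cdot I(f_1,\ldots,f_m)\,I(f_{m+2},\ldots,f_{m+n}).
\]
For $R$, part (i) shows that $\delta$ strips off the first letter $f_{\sigma^{-1}(1)}$ of each iterated primitive, and since $\sigma$ preserves the internal order of both blocks, $\sigma^{-1}(1)\in\{1,m+1\}$, so this first letter is $f_1$ or $f_{m+1}$. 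Grouping the shuffles accordingly then expresses $\delta(R)$ as a sum of two terms, which the inductive hypothesis (applied to the two shorter products appearing in $\delta(L)$) matches term-for-term.

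The main point requiring care — and the only genuinely combinatorial step — is the decomposition of $\Sigma_{m,n}$ used in computing $\delta(R)$: I must check that deleting the leading letter sets up a bijection between the shuffles in $\Sigma_{m,n}$ beginning with $f_1$ and the set $\Sigma_{m-1,n}$ of shuffles of $(f_2,\ldots,f_m)$ with $(f_{m+1},\ldots,f_{m+n})$, and likewise between those beginning with $f_{m+1}$ and $\Sigma_{m,n-1}$. Granting this bijection, $\delta(L)=\delta(R)$ follows from the inductive hypothesis, and the uniqueness principle forces $L=R$, completing the induction.
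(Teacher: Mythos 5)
Your proposal is correct and follows essentially the same route as the paper: (i) immediately from the definition of $I$ as a normalized section of $\delta$, and (ii) by induction on $m+n$, differentiating both sides, decomposing $\Sigma_{m,n}$ as the disjoint union of $\Sigma_{m-1,n}$ (shuffles with $\sigma(1)=1$) and $\Sigma_{m,n-1}$ (shuffles with $\sigma(m+1)=1$), and fixing the constant of integration via the fact that $\operatorname{ev}_0$ is an algebra homomorphism annihilating both sides. Your ``uniqueness principle'' ($\delta(G)=\delta(H)$ and $\operatorname{ev}_0(G)=\operatorname{ev}_0(H)$ force $G=H$) is just a repackaging of the paper's ``equal up to a constant, and the constant vanishes'' step.
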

\begin{proof}
	Statement (i) is immediate. To prove (ii), we use induction on $m+n$. For $m+n\leq 1$, the statement is trivial. In general, applying $\delta$ to the left hand side of \eqref{eqn:shuffleproduct} gives
	\begin{equation} \label{eqn:1}
		f_1\cdot I(f_2,\ldots,f_m)I(f_{m+1},\ldots,f_{m+n})+f_{m+1}\cdot I(f_1,\ldots,f_m)I(f_{m+2},\ldots,f_{m+n}),
	\end{equation}
	whereas applying $\delta$ to the right hand side yields 
	\begin{equation} \label{eqn:2}
		\sum_{\sigma \in \Sigma_{m,n}}f_{\sigma^{-1}(1)}\cdot I(f_{\sigma^{-1}(2)},\ldots,f_{\sigma^{-1}(m+n)}).
	\end{equation}
	Identifying $\Sigma_{m-1,n}$ with the permutations in $\Sigma_{m,n}$ which satisfy $\sigma(1)=1$, and likewise $\Sigma_{m,n-1}$ with the ones which satisfy $\sigma(m+1)=1$, we deduce that $\Sigma_{m,n}$ is the disjoint union of $\Sigma_{m-1,n}$ and $\Sigma_{m,n-1}$. Together with the induction hypothesis, it follows that \eqref{eqn:1} and \eqref{eqn:2} are equal, hence that \eqref{eqn:shuffleproduct} holds up to a constant. Moreover, as $\operatorname{ev}_0: \mathcal{A}^{\infty}\rightarrow \mathbb C$ is an algebra homomorphism, one verifies that both sides of \eqref{eqn:shuffleproduct} vanish upon applying $\operatorname{ev}_0$. Hence, the constant in question is zero, and we conclude.
\end{proof}
\begin{remark}
	In the case where the $f_1,\ldots,f_n \in \mathcal{A}^{\infty}$ are Fourier expansions of holomorphic modular forms, Definition \ref{dfn:iteratedprimitive} specializes to the iterated Eichler--Shimura integrals introduced by Manin, \cite{Manin}, and Brown, \cite{Brown:MMV}. More precisely, with notation and conventions as in \cite{Brown:MMV}, we have
	\[
	\begin{aligned}
		I(f_1,\ldots,f_n)=\frac{1}{(-2\pi i)^n}\int_\tau^{\overrightarrow{1}_\infty}f_1(\tau_1)d\tau_1\ldots f_n(\tau_n)d\tau_n.
	\end{aligned}
	\]
	This follows from \cite[Proposition 4.7.i)]{Brown:MMV} and the fact that the expansion in $q$ and $\tau$ of the right hand side has no constant term, by construction.
\end{remark}

\subsection{The general case via renormalization}

Now let $\mathbb C(\!(q)\!):=\operatorname{Frac}(\mathbb C[\![q]\!])$ be the $\mathbb C$-algebra of finite-tailed, formal Laurent series and let $\mathcal{A}:=\mathbb C (\!(q)\!)[t]$. Similarly to before, mapping a meromorphic quasimodular form to its Fourier expansion defines an embedding $\mathcal{Q}\mathcal{M}\hookrightarrow \mathcal{A}$. Also, the integration map $I$ extends to $\mathcal{A}$ by defining $I(f)=F-a_{0,0}$, where $F=\sum_{i=0}^nF_i(q)t^i$, with $F_i(q)=\sum_{n>\!\!\!>-\infty}a_{i,n}q^n$ is any primitive of $f$. However, the extension of Definition \ref{dfn:iteratedprimitive} to $f_1,\ldots,f_n \in \mathcal{A}$ does in general not satisfy the shuffle product formula, the problem being that the map $\mathcal{A}\rightarrow \mathbb C$ sending an element of $\mathcal{A}$ to its constant term (viewed as a series in $q$ and $t$) is not an algebra homomorphism.

In order to resolve this, the first step is to introduce suitably perturbed iterated primitives $I_\varepsilon(f_1,\ldots,f_n)$. Let $t_\varepsilon,\varepsilon$ be free variables, and define $\mathbb C(\!(q,\varepsilon)\!)$ to be the $\mathbb C$-algebra of formal Laurent series
\begin{equation}\label{eqn:formalseries}
	f=\sum_{m,n>\!\!\!>-\infty}a_{m,n}q^m\varepsilon^n, \qquad a_{m,n} \in \mathbb C
\end{equation}
and let $\mathcal{A}_\varepsilon:=\mathbb C(\!(q,\varepsilon)\!)[t,t_\varepsilon]$. The $q$-derivative extends trivially to $\mathcal{A}_\varepsilon$ by $\delta(t_\varepsilon)=\delta(\varepsilon)=0$ and every $f\in \mathcal{A}_\varepsilon$ has a $\delta$-primitive, unique up to adding an element of $\ker(\delta)=\mathbb C(\!(\varepsilon)\!)[t_\varepsilon]$. Therefore, similarly to Definition \ref{dfn:primitive}, we can define a formal integration map by
\[
I_\varepsilon: \mathcal{A}_\varepsilon\rightarrow \mathcal{A}_\varepsilon, \qquad f\mapsto F-\operatorname{ev}_\varepsilon(F),
\]
where $F$ is any primitive of $f$ and $\operatorname{ev}_\varepsilon: \mathcal{A}_\varepsilon \rightarrow \mathbb C(\!(\varepsilon)\!)[t_\varepsilon]$ is defined by substituting $(t,q)\mapsto (t_\varepsilon,\varepsilon)$. Now, given elements $f_1,\ldots,f_n\in \mathcal{A}$, we define $I_\varepsilon(f_1,\ldots,f_n) \in \mathcal{A}_\varepsilon$ recursively by
\[
I_\varepsilon(f_1,\ldots,f_n)=
\begin{cases}
	I_\varepsilon(f_1\cdot I_\varepsilon(f_2,\ldots,f_n)), & \mbox{if }n\geq 1,\\
	1, & \mbox{if }n=0.
\end{cases}
\]
Using that $\operatorname{ev}_\varepsilon$ is an algebra homomorphism, it is straightforward to verify that the analogue of Proposition \ref{prop:iteratedprimitiveprops} holds for the $I_\varepsilon(f_1,\ldots,f_n)$.

We next modify the $I_\varepsilon(f_1,\ldots,f_n)$ by adding suitable counterterms to cancel the divergences as $\varepsilon \to 0$. Let $\mathcal{A}_+ \subset \mathcal{A}_\varepsilon$ (respectively, $\mathcal{A}_-\subset \mathcal{A}_\varepsilon$) denote the subset of those elements $f(q,\varepsilon,t,t_\varepsilon)$ which are analytic at $\varepsilon=0$ (respectively, such that $f(q,\varepsilon^{-1},t,t_\varepsilon)$ is analytic at $\varepsilon=0$). There is the direct sum decomposition
\[
\mathcal{A}_\varepsilon=\mathcal{A}_+\oplus \mathcal{A}_-,
\]
and we let $\pi_+: \mathcal{A}_\varepsilon\rightarrow \mathcal{A}_+$, and $\pi_-: \mathcal{A}_\varepsilon\rightarrow \mathcal{A}_-$ denote the canonical projections.
\begin{definition}
	Given $f_1,\ldots,f_n\in \mathcal{A}$, define $I_+(f_1,\ldots,f_n) \in \mathcal{A}_+$ and $I_-(f_1,\ldots,f_n)\in \mathcal{A}_-$ recursively by
	\[
	\begin{aligned}
		I_+(f_1,\ldots,f_n)&=\pi_+\left(I_\varepsilon(f_1,\ldots,f_n)+\sum_{i=1}^{n-1}I_\varepsilon(f_1,\ldots,f_i)I_-(f_{i+1},\ldots,f_n)\right),\\
		I_-(f_1,\ldots,f_n)&=-\pi_-\left(I_\varepsilon(f_1,\ldots,f_n)+\sum_{i=1}^{n-1}I_\varepsilon(f_1,\ldots,f_i)I_-(f_{i+1},\ldots,f_n)\right).
	\end{aligned}
	\]
\end{definition}
The next proposition, analogous to Proposition \ref{prop:iteratedprimitiveprops}, is crucial.
\begin{proposition} \label{prop:iteratedprimitivepropsII}
	The following statements are true.
	\begin{itemize}
		\item[(i)] For all $f_1,\ldots,f_n \in \mathcal{A}$, we have $\delta(I_-(f_1,\ldots,f_n))=0$ and
		\[
		\delta(I_+(f_1,\ldots,f_n))=f_1\cdot I_+(f_2,\ldots,f_n)
		\]
		\item[(ii)] For all integers $m,n\geq 0$ and all $f_1,\ldots,f_{m+n} \in \mathcal{A}$, we have
		\begin{equation} \label{eqn:shuffleproductII}
			I_\pm(f_1,\ldots,f_m)I_\pm(f_{m+1},\ldots,f_{m+n})=\sum_{\sigma\in \Sigma_{m,n}}I_\pm(f_{\sigma^{-1}(1)},\ldots,f_{\sigma^{-1}(m+n)}).
		\end{equation}
	\end{itemize}
\end{proposition}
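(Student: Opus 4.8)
The plan is to run everything by induction on length, with two structural facts doing the heavy lifting. First, since $\delta(\varepsilon)=\delta(t_{\varepsilon})=0$, the operator $\delta$ preserves the order in $\varepsilon$ and therefore commutes with both projections, $\delta\circ\pi_{\pm}=\pi_{\pm}\circ\delta$. Second, every $f_i\in\mathcal{A}$ is free of $\varepsilon$, hence lies in $\mathcal{A}_{+}$, so that $\pi_{-}(f_i\cdot g)=0$ whenever $g\in\mathcal{A}_{+}$. Throughout I write $\overline{I}(f_1,\dots,f_n)$ for the bracketed Bogoliubov expression, so that by construction $\overline{I}=I_{+}-I_{-}$, and I freely use the $I_{\varepsilon}$-analogue of Proposition \ref{prop:iteratedprimitiveprops}: both the differential equation $\delta(I_{\varepsilon}(f_1,\dots,f_n))=f_1\cdot I_{\varepsilon}(f_2,\dots,f_n)$ and the shuffle formula for $I_{\varepsilon}$.

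For part (i) I induct on $n$, the cases $n\le 1$ being a direct check. Applying $\delta$ to $\overline{I}(f_1,\dots,f_n)$ via the differential equation for $I_{\varepsilon}$ and the Leibniz rule, every term $I_{\varepsilon}(f_1,\dots,f_i)\,\delta\bigl(I_{-}(f_{i+1},\dots,f_n)\bigr)$ vanishes by the inductive hypothesis $\delta(I_{-})=0$, while the surviving terms telescope. Extracting the $i=1$ contribution and reassembling the rest as a Bogoliubov expression in length $n-1$, one finds
\[
\delta\bigl(\overline{I}(f_1,\dots,f_n)\bigr)=f_1\cdot\bigl(\overline{I}(f_2,\dots,f_n)+I_{-}(f_2,\dots,f_n)\bigr)=f_1\cdot I_{+}(f_2,\dots,f_n),
\]
using $\overline{I}=I_{+}-I_{-}$ in length $n-1$. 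The right-hand side lies in $\mathcal{A}_{+}$. Applying $\pi_{-}$, which annihilates $\mathcal{A}_{+}$, and commuting it past $\delta$ gives $\delta(I_{-}(f_1,\dots,f_n))=0$; applying $\pi_{+}$, the identity on $\mathcal{A}_{+}$, gives $\delta(I_{+}(f_1,\dots,f_n))=f_1\cdot I_{+}(f_2,\dots,f_n)$.

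Part (ii) is the multiplicativity of the Birkhoff decomposition, and the essential input is that minimal subtraction $\pi_{-}$ is a Rota--Baxter operator of weight $-1$ on the commutative algebra $\mathcal{A}_{\varepsilon}$,
\[
\pi_{-}(x)\pi_{-}(y)=\pi_{-}\bigl(\pi_{-}(x)\,y+x\,\pi_{-}(y)-xy\bigr),
\]
which I would verify by a one-line computation on monomials $\varepsilon^{a},\varepsilon^{b}$ (the $q,t,t_{\varepsilon}$ dependence being carried along as commuting coefficients, since $\pi_{\pm}$ sees only the $\varepsilon$-grading). I first prove the shuffle formula for $I_{-}$ by induction on $m+n$: expanding $I_{-}(f_1,\dots,f_m)I_{-}(f_{m+1},\dots,f_{m+n})$ through the defining recursion, rewriting the deconcatenation coproduct by means of the fact that it is an algebra morphism for the shuffle product, $\Delta(u\shuffle v)=\Delta(u)\Delta(v)$, and inserting the shuffle formula for $I_{\varepsilon}$ together with the inductive hypothesis for $I_{-}$, the Rota--Baxter relation is exactly what is needed to reassemble the result as $\sum_{\sigma}I_{-}(\dots)$. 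Granting this, the shuffle formula for $I_{+}$ is automatic: one checks the identity $I_{+}=\Phi_{\varepsilon}\star\Phi_{-}$, where $\star$ is convolution for the deconcatenation coproduct and $\Phi_{\varepsilon},\Phi_{-}$ are the linear maps sending $[f_1|\dots|f_n]$ to $I_{\varepsilon}(f_1,\dots,f_n)$ and $I_{-}(f_1,\dots,f_n)$ respectively (with the convention $I_{\pm}(\emptyset)=1$ on the unit); since a convolution of two characters into a commutative algebra is again a character, $\Phi_{+}$ is multiplicative.

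The main obstacle is the shuffle formula for $I_{-}$. Part (i) is essentially formal once one observes that $\delta$ commutes with $\pi_{\pm}$ and that the $f_i$ have no poles in $\varepsilon$. In (ii), the difficulty is that $\pi_{\pm}$ are merely projections, not algebra maps, so $I_{-}(u)I_{-}(v)$ cannot be handled factor by factor; the Rota--Baxter identity is precisely the device converting the cross-terms produced by $\pi_{-}$ into shuffle terms, and the genuinely delicate point is matching the combinatorics of the Bogoliubov recursion against the deconcatenation coproduct. This is the Connes--Kreimer argument, and I would either reproduce it by the above induction or invoke it directly from \cite{ConnesKreimer,Manchon}.
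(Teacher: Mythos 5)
Your proposal is correct and follows the paper's own route: part (i) is the same induction on $n$, exploiting that $\pi_{\pm}$ commute with $\delta$, that $\pi_{-}$ is $\mathcal{A}$-linear and kills $\mathcal{A}_{+}$, and that the $i=1$ term of the Bogoliubov expression reassembles to give $f_1\cdot I_{+}(f_2,\ldots,f_n)$ (the paper computes $\delta(I_{-})$ first and recovers $\delta(I_{+})$ from $I_{+}=\overline{I}+I_{-}$, which is only a cosmetic reordering of your projection step). For part (ii) the paper simply invokes \cite[Theorem II.5.1.(3)]{Manchon}, and your Rota--Baxter identity for minimal subtraction, the induction through the Bogoliubov recursion, and the convolution identity $\Phi_{+}=\Phi_{\varepsilon}\star\Phi_{-}$ constitute exactly the proof of that cited theorem, so this is the same argument unpacked rather than a different one; your explicit flagging of the convention $I_{\pm}(\emptyset)=1$, needed for the character/convolution formalism and for \eqref{eqn:shuffleproductII} with $m=0$, is a sensible clarification.
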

\begin{proof}
	For statement (i), it is enough to prove that $\delta(I_-(f_1,\ldots,f_n))=0$; the other part follows from this using Proposition \ref{prop:iteratedprimitiveprops}.(i) and the Leibniz rule. If $n=1$, then
	\[
	\delta(I_-(f))=-f_1\cdot \pi_-(1)=0,
	\]
	since $\pi_-$ is $\mathcal{A}$-linear and commutes with $\delta$. Similarly, by induction on $n$, we have
	\[
	\begin{aligned}
		\delta(I_-(f_1,\ldots,f_n))&=-f_1\cdot \pi_-\Bigg(I_\varepsilon(f_2,\ldots,f_n)+\sum_{i=1}^{n-1}I_\varepsilon(f_2,\ldots,f_i)I_-(f_{i+1},\ldots,f_n)\Bigg)\\
		&=f_1\cdot (I_-(f_2,\ldots,f_n)-\pi_-(I_-(f_2,\ldots,f_n)))\\
		&=0
	\end{aligned}
	\]
	since $\pi_-$ is the identity on $\mathcal{A}_-$. This proves (i). Statement (ii) is a special case of a general result in renormalization; see \cite[Theorem II.5.1.(3)]{Manchon}.
\end{proof}
We are now in a position to define iterated primitives of general Laurent series.
\begin{definition} \label{dfn:iteratedprimitiveII}
	Given $f_1,\ldots,f_n \in \mathcal{A}$, define their \emph{iterated primitive} as
	\[
	I(f_1,\ldots,f_n)=\pi(I_+(f_1,\ldots,f_n)),
	\]
	where $\pi: \mathcal{A}_+\rightarrow \mathcal{A}$ is the canonical projection onto the constant term in $\varepsilon$ and $t_\varepsilon$.
\end{definition}
\begin{remark}
	Since $I_-(f_1,\ldots,f_n)=0$, if $f_1,\ldots,f_n\in \mathcal{A}^{\infty}$, Definition \ref{dfn:iteratedprimitiveII} is compatible with Definition \ref{dfn:iteratedprimitive}.
\end{remark}

\subsection{A worked example}

We illustrate Definition \ref{dfn:iteratedprimitiveII} in a simple example. Let $f_1=q^{-1}$ and $f_2=q$. Since $\delta(q^n)=nq^n$, we have
\[
I_\varepsilon(f_1)=-q^{-1}+\varepsilon^{-1}, \qquad \mbox{and} \qquad I_\varepsilon(f_2)=q-\varepsilon,
\]
so that
\[
I_+(f_1)=-q^{-1}, \qquad I_-(f_1)=-\varepsilon^{-1}, \qquad \mbox{and} \qquad I_+(f_2)=q-\varepsilon, \qquad I_-(f_2)=0.
\]
It follows that
\[
I(f_1)=I_+(f_1)=-q^{-1}, \qquad \mbox{and} \qquad I(f_2)=q.
\]
On the other hand, since $\delta(t)=1$, we have $I_\varepsilon(1)=t-t_\varepsilon$. Using this, we compute
\[
I_\varepsilon(f_1,f_2)=t-t_\varepsilon+q^{-1}\varepsilon-1, \qquad I_\varepsilon(f_2,f_1)=-(t-t_\varepsilon)+q\varepsilon^{-1}-1,
\]
as well as
\[
I_+(f_1,f_2)=t-t_\varepsilon+q^{-1}\varepsilon-1, \qquad I_+(f_2,f_1)=-t+t_\varepsilon.
\]
Finally, applying the projection $\pi: \mathcal{A}_+\rightarrow \mathcal{A}$ gives
\[
I(f_1,f_2)=t-1, \qquad I(f_2,f_1)=-t.
\]
We also observe that the shuffle product formula $I(f_1)I(f_2)=I(f_1,f_2)+I(f_2,f_1)$ holds and that
\begin{equation} \label{eqn:integrationbypartsexample}
	I(\delta(f_2),f_1)=f_2\cdot I(f_1)-I(f_1\cdot f_2)+1.
\end{equation}
\subsection{Some basic properties of renormalized iterated integrals}

Proposition \ref{prop:iteratedprimitiveprops} generalizes to the meromorphic case.
\begin{proposition} \label{prop:iteratedprimitivepropsIII}
	The following statements are true.
	\begin{itemize}
		\item[(i)] For all $f_1,\ldots,f_n \in \mathcal{A}$, it holds that
		\[
		\delta(I(f_1,\ldots,f_n))=f_1\cdot I(f_2,\ldots,f_n).
		\]
		\item[(ii)] For all integers $m,n\geq 0$, and all $f_1,\ldots,f_{m+n} \in \mathcal{A}$, we have
		\begin{equation} \label{eqn:shuffleproductIII}
			I(f_1,\ldots,f_m)I(f_{m+1},\ldots,f_{m+n})=\sum_{\sigma\in \Sigma_{m,n}}I(f_{\sigma^{-1}(1)},\ldots,f_{\sigma^{-1}(m+n)}).
		\end{equation}
	\end{itemize}
\end{proposition}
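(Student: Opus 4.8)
The plan is to deduce both statements directly from the corresponding properties of $I_+$ established in Proposition \ref{prop:iteratedprimitivepropsII}, by showing that the projection $\pi\colon \mathcal{A}_+\to \mathcal{A}$ appearing in Definition \ref{dfn:iteratedprimitiveII} is a morphism of differential $\mathbb C$-algebras that restricts to the identity on $\mathcal{A}$. To make this precise, I would first write a general element of $\mathcal{A}_+$ as $f=\sum_{i,j\geq 0}f_{ij}(q,t)\,\varepsilon^i t_\varepsilon^j$, where the powers of $\varepsilon$ are non-negative because $f$ is analytic at $\varepsilon=0$, and those of $t_\varepsilon$ are non-negative because $\mathcal{A}_\varepsilon$ is a polynomial ring in $t_\varepsilon$. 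In these terms $\pi$ is simply evaluation at $\varepsilon=t_\varepsilon=0$, i.e.\ $\pi(f)=f_{00}$.

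With this description, three properties of $\pi$ need to be checked. First, $\pi$ is an algebra homomorphism: in a product $fg$ with $f,g\in\mathcal{A}_+$, the coefficient of $\varepsilon^0 t_\varepsilon^0$ can only arise from the product of the $\varepsilon^0 t_\varepsilon^0$-terms of $f$ and $g$, precisely because all exponents of $\varepsilon$ and $t_\varepsilon$ occurring are non-negative; hence $\pi(fg)=\pi(f)\pi(g)$. Second, $\pi$ is the identity on $\mathcal{A}\subset\mathcal{A}_+$, since elements of $\mathcal{A}$ involve neither $\varepsilon$ nor $t_\varepsilon$. Third, $\pi$ commutes with $\delta$: as $\delta(\varepsilon)=\delta(t_\varepsilon)=0$, the operator $\delta$ acts coefficientwise on the expansion above, so that $\pi(\delta(f))=\delta(f_{00})=\delta(\pi(f))$.

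Granting these facts, statement (i) is immediate from $I(f_1,\ldots,f_n)=\pi(I_+(f_1,\ldots,f_n))$:
\[
\delta(I(f_1,\ldots,f_n))=\pi\bigl(\delta(I_+(f_1,\ldots,f_n))\bigr)=\pi\bigl(f_1\cdot I_+(f_2,\ldots,f_n)\bigr)=f_1\cdot I(f_2,\ldots,f_n),
\]
where the middle equality is Proposition \ref{prop:iteratedprimitivepropsII}.(i), and the last uses multiplicativity of $\pi$ together with $\pi(f_1)=f_1$. Statement (ii) follows in the same way: applying the (linear, multiplicative) map $\pi$ to both sides of the shuffle identity \eqref{eqn:shuffleproductII} for $I_+$ turns the left-hand side into the product $I(f_1,\ldots,f_m)\,I(f_{m+1},\ldots,f_{m+n})$ and the right-hand side into the desired sum over $\Sigma_{m,n}$.

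The main (and essentially only) obstacle is the verification that $\pi$ is multiplicative; everything else is formal bookkeeping. I expect no genuine difficulty, but it is worth being explicit that this is exactly where the decomposition $\mathcal{A}_\varepsilon=\mathcal{A}_+\oplus\mathcal{A}_-$ and the passage from $I_\varepsilon$ to $I_+$ are indispensable: the analogous constant-term projection fails to be an algebra homomorphism on all of $\mathcal{A}_\varepsilon$, since there negative powers of $\varepsilon$ occur, which is precisely the failure that the renormalization was designed to repair.
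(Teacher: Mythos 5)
Your proposal is correct and takes essentially the same route as the paper, whose entire proof is the one-line observation that $\pi\colon \mathcal{A}_+\rightarrow \mathcal{A}$ is an algebra homomorphism commuting with $\delta$, so that both statements follow directly from Proposition \ref{prop:iteratedprimitivepropsII}. You simply make explicit the (correct) verification that $\pi$ is multiplicative, restricts to the identity on $\mathcal{A}$, and commutes with $\delta$, which the paper leaves to the reader.
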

\begin{proof}
	Since $\pi: \mathcal{A}_+\rightarrow \mathcal{A}$ is an algebra homomorphism that commutes with $\delta$, this follows directly from Proposition \ref{prop:iteratedprimitivepropsII}.
\end{proof}
For later use, we record the following 'integration by parts' formula which follows immediately from Proposition \ref{prop:iteratedprimitivepropsIII}.(i) by induction on $n$.
\begin{corollary} \label{cor:partialintegration}
	For all $f_1,\ldots,f_n \in \mathcal{A}$, there exist constants $c_1,\ldots,c_{n-1} \in \ker(\delta)=\mathbb C$ such that
	\[
	\begin{aligned}
		I(\delta(f_1),f_2,\ldots,f_n)&=f_1\cdot I(f_2,\ldots,f_n)-I(f_1\cdot f_2,\ldots,f_n)+c_1,\\
		I(f_1,\ldots,f_{n-1},\delta(f_n))&=I(f_1,\ldots,f_{n-1}\cdot f_n)+\sum_{j=1}^{n-1}c_j I(f_1,\ldots,f_j),
	\end{aligned}
	\]
	and for every $1<i<n$, we have
	\[
	\begin{aligned}
		I(f_1,\ldots,\delta(f_i),\ldots,f_n)=I(f_1,\ldots,f_{i-1}\cdot f_i,\ldots,f_n)&-I(f_1,\ldots,f_i\cdot f_{i+1},\ldots,f_n)\\
		&+\sum_{j=1}^{i-1}c_j I(f_1,\ldots,f_{j-1}).
	\end{aligned}
	\]
\end{corollary}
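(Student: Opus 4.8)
The plan is to prove all three identities by a single device. Since $\ker(\delta)=\mathbb C$ on $\mathcal A$, it suffices to check that the two sides of each identity have the same $\delta$-derivative and then to pin down the resulting additive constants. Every derivative is computed using Proposition \ref{prop:iteratedprimitivepropsIII}.(i), which rewrites $\delta(I(g_1,\ldots,g_m))$ as $g_1\cdot I(g_2,\ldots,g_m)$, together with the Leibniz rule.

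First I would dispose of the case where $\delta$ hits the first slot, which is the base case and needs no induction. Proposition \ref{prop:iteratedprimitivepropsIII}.(i) gives $\delta(I(\delta(f_1),f_2,\ldots,f_n))=\delta(f_1)\cdot I(f_2,\ldots,f_n)$. For the candidate right-hand side, Leibniz yields $\delta(f_1\cdot I(f_2,\ldots,f_n))=\delta(f_1)I(f_2,\ldots,f_n)+f_1f_2\,I(f_3,\ldots,f_n)$, while $\delta(I(f_1f_2,f_3,\ldots,f_n))=f_1f_2\,I(f_3,\ldots,f_n)$; the two $f_1f_2$-terms cancel and the derivatives coincide. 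Hence the two sides differ by a single constant $c_1\in\mathbb C$, which is the first identity. This constant is genuinely present and can be nonzero, as the worked example shows.

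For the last- and middle-slot cases I would induct on $n$, peeling off $f_1$ via Proposition \ref{prop:iteratedprimitivepropsIII}.(i): one has $\delta(I(f_1,\ldots,\delta(f_i),\ldots,f_n))=f_1\cdot I(f_2,\ldots,\delta(f_i),\ldots,f_n)$, and the inner primitive has length $n-1$ with $\delta$ on a non-first slot, or on the first slot when $i=2$, which is already covered. The induction hypothesis rewrites the inner primitive as merge terms plus a combination $\sum_j c_j'\,I(f_2,\ldots)$ of shorter primitives. Multiplying by $f_1$ and using $f_1\cdot I(f_2,\ldots,f_\ell)=\delta(I(f_1,f_2,\ldots,f_\ell))$, each correction re-integrates to $c_j'\,I(f_1,f_2,\ldots,f_\ell)$, which is exactly the shape of the sums occurring in the second and third identities; the merge terms likewise re-integrate to the stated products $I(\ldots,f_{i-1}f_i,\ldots)$ and $I(\ldots,f_if_{i+1},\ldots)$. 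Matching derivatives then forces equality up to one further constant, absorbed into the lowest term of the sum.

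The main obstacle is the bookkeeping of constants rather than any conceptual point: I would need to track which shorter iterated primitives appear in the sum and with which indices, and to check that the new integration constant produced at each inductive step lands in the correct slot, so that the accumulated collection is precisely the stated family $c_1,\ldots,c_{n-1}$. Confirming that no spurious additive constant survives — equivalently, that the relevant constant term vanishes, as built into the definition of $I$ — is the only delicate step, and it is settled slot by slot as the induction unwinds.
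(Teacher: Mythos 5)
Your strategy is exactly the paper's: the paper disposes of Corollary \ref{cor:partialintegration} in one line, saying it follows from Proposition \ref{prop:iteratedprimitivepropsIII}.(i) by induction on $n$, and your argument --- differentiate both sides, cancel via the Leibniz rule, use $\ker(\delta)=\mathbb C$, then peel off $f_1$ with $\delta(I(f_1,\ldots,f_n))=f_1\cdot I(f_2,\ldots,f_n)$ and re-integrate --- is precisely that induction written out. In method you match the paper, and the method is sound.

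There is, however, one concrete place where your final bookkeeping claim (``the accumulated collection is precisely the stated family'') does not survive scrutiny, and it is exactly the spot you flagged as delicate. Each re-integration step spawns a fresh additive constant of length zero, and the innermost application of the first-slot identity contributes its constant at the \emph{top} length; carried out honestly, the induction for $\delta$ in slot $i$ yields corrections $\sum_{j=0}^{i-1}c_j\,I(f_1,\ldots,f_j)$ of every length $0,1,\ldots,i-1$ (with the convention $I(f_1,\ldots,f_0):=1$). This does not coincide with the printed index ranges. The last-slot sum $\sum_{j=1}^{n-1}c_jI(f_1,\ldots,f_j)$ has no length-zero slot into which your final constant can be ``absorbed'': already for $n=2$, the paper's own worked example with $f_1=q^{-1}$, $f_2=q$ gives $I(f_1,\delta(f_2))=I(q^{-1},q)=t-1=I(f_1\cdot f_2)-1$, and the bare constant $-1$ is not of the form $c_1\,I(f_1)=-c_1q^{-1}$. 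Dually, the middle-slot sum stops at length $i-2$ and misses the length-$(i-1)$ term $c\,I(f_1,\ldots,f_{i-1})$ inherited from the base-case constant of the inner first-slot identity (for $i=2$, $n=3$, $f_1=f_2=q$, $f_3=q^{-1}$ one computes $I(q,\delta(q),q^{-1})-I(q^2,q^{-1})+I(q,1)=q=I(f_1)$, which is not constant; the first-slot constant here is the nonzero one from \eqref{eqn:integrationbypartsexample}). This is a slip in the corollary's printed index ranges rather than in your approach: everything the corollary is used for later (rewriting a derivative slot as a constant-coefficient combination of strictly shorter iterated primitives, as in the surjectivity part of Theorem \ref{thm:main}) only needs the corrected form, which is exactly what your induction delivers. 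A complete write-up should state the ranges $j=0,\ldots,i-1$ rather than assert the literal printed sums.
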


\begin{remark}
	The constants $c_i$ in the preceding corollary are not necessarily equal to zero; see equation \eqref{eqn:integrationbypartsexample} above for an example where they are not.
\end{remark}
For later use, we record the following linear independence criterion for renormalized iterated primitives, which is a special case of \cite[Theorem 2.1]{DDMS}.
\begin{theorem}[Deneufch\^atel--Duchamp--Minh--Solomon] \label{thm:DDMS}
	Let $K\subset \mathcal{A}$ be a differential subfield and let $\mathcal{F}=\{f_i\}_{i\in I} \subset K$ be a family of elements of $K$. Then the following conditions are equivalent.
	\begin{itemize}
		\item[(i)]
		The family
		\[
		\bigcup_{m\geq 0}\{I(f_{i_1},\ldots,f_{i_m}) : (i_1,\ldots,i_m) \in \{1,\ldots,n\}^m\} \subset \mathcal{A}
		\]
		is $K$-linearly independent.
		\item[(ii)]
		The family $\mathcal{F}$ is $\mathbb C$-linearly independent and we have $d(K)\cap \operatorname{Span}_\mathbb C\mathcal{F}=\{0\}$
	\end{itemize}
	\qed
\end{theorem}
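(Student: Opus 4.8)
The plan is to reformulate condition (i) as the injectivity of the $K$-linear map $\phi\colon K\langle X\rangle\to\mathcal{A}$ extending $x_{i_1}\cdots x_{i_m}\mapsto I(f_{i_1},\ldots,f_{i_m})$, where $X=\{x_i\}$ is an alphabet indexing $\mathcal{F}$. The only structural input needed is the differential recursion $\delta(I(f_{i_1},\ldots,f_{i_m}))=f_{i_1}\,I(f_{i_2},\ldots,f_{i_m})$ of Proposition \ref{prop:iteratedprimitivepropsIII}.(i), which one may package as $\delta S=(\sum_i f_i x_i)\,S$ for the generating series $S=\sum_w I(w)\,w$. For the implication (i)$\Rightarrow$(ii) I would argue by contraposition: condition (ii) fails exactly when some nonzero $\sum_i\lambda_i f_i$ with $\lambda_i\in\mathbb C$ lies in $\delta(K)$, say $\sum_i\lambda_i f_i=\delta(g)$ with $g\in K$ (the case $\sum_i\lambda_i f_i=0$ corresponding to $g=0$). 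Then $\delta(\sum_i\lambda_i I(f_i)-g)=\sum_i\lambda_i f_i-\delta(g)=0$, so $\sum_i\lambda_i I(f_i)-g\in\ker(\delta)\cap\mathcal{A}=\mathbb C$, which is a nontrivial $K$-linear relation among the length $\leq 1$ iterated primitives and hence violates (i).

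The substance is (ii)$\Rightarrow$(i), which I would establish by induction on the length $N$, the inductive claim being that $\phi$ is injective on polynomials of length $\leq N$. The base case $N=1$ is a template for the general step. Given a nonzero relation expressing $\sum_i c_i I(f_i)\in K$ with minimal number of nonzero coefficients $c_i\in K$, normalize a leading coefficient to $1$ and apply $\delta$; this yields a relation of the same form $\sum_i\delta(c_i)I(f_i)\in K$ whose support is strictly smaller, so by minimality $\delta(c_i)=0$ for all $i$, i.e. $c_i\in\ker(\delta)\cap K=\mathbb C$. Then $\sum_i c_i f_i=\delta(\sum_i c_i I(f_i))$ lies in $\delta(K)\cap\operatorname{Span}_{\mathbb C}\mathcal{F}$, which is $\{0\}$ by (ii), and $\mathbb C$-linear independence of $\mathcal{F}$ forces all $c_i=0$.

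For the inductive step I would consider the set $L$ of top-degree (length $N$) parts of length-$\leq N$ relations; using the differential recursion one checks that $L$ is a $K$-subspace of the homogeneous component $K\langle X\rangle_{=N}$ stable under the coefficientwise derivation $\delta$, and it suffices to prove $L=\{0\}$. Running the minimal-support argument of the base case on $L$ produces, if $L\neq\{0\}$, a nonzero element $Q=\sum_{|w|=N}\lambda_w\,w$ with \emph{constant} coefficients $\lambda_w\in\mathbb C$, together with a relation $\phi(Q+R)=0$ for an $R$ of length $<N$ (unique by the inductive hypothesis). The purpose of arranging constant top coefficients is that $\delta Q=0$, so differentiating $\phi(Q+R)=0$ no longer reintroduces length-$N$ iterated primitives; comparing the coefficient of $I(u)$ for each word $u$ of length $N-1$—legitimate because $\{I(u):|u|\leq N-1\}$ is $K$-linearly independent by induction—yields $\delta(c_u)=-\sum_i\lambda_{x_i u}f_i$, where $c_u$ is the corresponding coefficient of $R$. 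I expect this coefficient comparison, together with the preceding reduction to constant top coefficients, to be the main obstacle, since it is precisely here that both halves of (ii) enter: the right-hand side lies in $\delta(K)\cap\operatorname{Span}_{\mathbb C}\mathcal{F}=\{0\}$, whence $\sum_i\lambda_{x_i u}f_i=0$, and then $\lambda_{x_i u}=0$ by $\mathbb C$-linear independence of $\mathcal{F}$. As $u$ ranges over all words of length $N-1$ the words $x_i u$ exhaust all words of length $N$, so every $\lambda_w$ vanishes, contradicting $Q\neq 0$ and closing the induction.
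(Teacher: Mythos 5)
Your proof is correct, but there is nothing in the paper to compare it against step by step: Theorem \ref{thm:DDMS} is stated without proof, being quoted as a special case of \cite[Theorem 2.1]{DDMS}. What you have written is in substance a self-contained reconstruction of the Deneufch\^atel--Duchamp--Minh--Solomon argument, transported to the formal renormalized setting of Section \ref{sec:4}: induction on word length; a minimal-support reduction (using that $K$ is a field and $\ker(\delta)\cap K=\mathbb C$) to a relation whose length-$N$ coefficients are constants; differentiation, which then reintroduces no length-$N$ terms; and coefficient comparison at length $N-1$, where, exactly as you anticipated, both halves of (ii) are consumed via $\delta(c_u)=-\sum_i\lambda_{x_iu}f_i\in\delta(K)\cap\operatorname{Span}_{\mathbb C}\mathcal{F}=\{0\}$. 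The steps check out: your space $L$ is indeed a $K$-subspace stable under the coefficientwise derivation, since differentiating a relation $\phi(P+R)=0$ yields $\phi(\delta P+R'')=0$ with $R''$ of length $<N$ (the recursion $\delta I(f_{i}, f_{j_1},\ldots)=f_i\,I(f_{j_1},\ldots)$ drops the length by one), and if independence fails at length $N$ then the inductive hypothesis forces the offending relation to have nonzero top part, so $L\neq\{0\}$ --- a point you use implicitly and could state. What your route buys over the paper's citation is the explicit verification that the renormalized primitives of Definition \ref{dfn:iteratedprimitiveII}, which are not literal iterated integrals, satisfy the only two properties the cited theorem needs: the differential recursion of Proposition \ref{prop:iteratedprimitivepropsIII}.(i) and the fact that $\ker(\delta)=\mathbb C$ in $\mathcal{A}=\mathbb C(\!(q)\!)[t]$ (the fact recorded alongside Corollary \ref{cor:partialintegration}); notably, the shuffle product formula plays no role in the independence statement. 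One cosmetic remark: in (i)$\Rightarrow$(ii) you should observe that the relation $\sum_i\lambda_i I(f_i)-(g+c)\cdot 1=0$ is a relation among distinct members of the \emph{indexed} family of (i) (which is indexed by tuples), so it violates (i) even when some of the $f_i$ coincide as elements of $K$.
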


\section{The algebraic structure of iterated primitives of meromorphic quasimodular forms} \label{sec:5}

Consider the subfield $K:=\mathbb C(E_2,E_4,E_6,q)\subset \mathcal{A}$. It is closed under $\delta$.
\begin{definition}
	Define
	\[
	\mathcal{I}^{\mathcal{Q}\mathcal{M}}:=\operatorname{Span}_K\{ I(f_1,\ldots,f_n) \, : \, f_1,\ldots,f_n \in \mathcal{Q}\mathcal{M}, \, n\geq 0 \} \subset \mathcal{A}.
	\]
\end{definition}
\begin{remark}
	It follows from Proposition \ref{prop:iteratedprimitivepropsIII} that $\mathcal{I}^{\mathcal{Q}\mathcal{M}}$ is a differential $K$-subalgebra of $\mathcal{A}$.
\end{remark}
On the other hand, let $K\langle \mathcal{Q}\mathcal{M}\rangle$ denote the $K$-shuffle algebra on the $\mathbb C$-vector space $\mathcal{Q}\mathcal{M}$ (see for example \cite[Section 1.4]{Reutenauer} for generalities on shuffle algebras). Elements of $K\langle \mathcal{Q}\mathcal{M}\rangle$ are $K$-linear combinations of words $[f_1|\ldots|f_n]$, with $f_1,\ldots,f_n\in \mathcal{Q}\mathcal{M}$, and its product is given by the shuffle product, defined on words by
\[
[f_1|\ldots|f_m]\shuffle [f_{m+1}|\ldots|f_{m+n}]=\sum_{\sigma\in\Sigma_{m,n}}[f_{\sigma^{-1}(1)}|\ldots|f_{\sigma^{-1}(m+n)}],
\]
with $\Sigma_{m,n}$ as in Proposition \ref{prop:iteratedprimitiveprops}.(ii). By definition of $\mathcal{I}^{\mathcal{Q}\mathcal{M}}$, there is a surjective $K$-linear map
\begin{equation} \label{eqn:algebrahomomorphism}
	I: K\langle \mathcal{Q}\mathcal{M}\rangle\rightarrow \mathcal{I}^{\mathcal{Q}\mathcal{M}}, \qquad [f_1|\ldots|f_n] \mapsto I(f_1,\ldots,f_n),
\end{equation}
which is an algebra homomorphism, by Proposition \ref{prop:iteratedprimitivepropsIII}.(ii).

Now let $\mathcal{C}\subset \mathcal{Q}\mathcal{M}$ be a $\mathbb C$-linear complement of $\delta(\mathcal{Q}\mathcal{M})\subset \mathcal{Q}\mathcal{M}$, that is, $\mathcal{C}$ is chosen so as to satisfy $\mathcal{C}\oplus \delta(\mathcal{Q}\mathcal{M})=\mathcal{Q}\mathcal{M}$. We obtain a $K$-subalgebra $K\langle \mathcal{C}\rangle\subset K\langle \mathcal{Q}\mathcal{M}\rangle$.
\begin{theorem} \label{thm:main}
	The restriction of \eqref{eqn:algebrahomomorphism} to $K\langle \mathcal{C}\rangle$ induces an isomorphism
	$
	K\langle \mathcal{C}\rangle\cong \mathcal{I}^{\mathcal{Q}\mathcal{M}}.
	$
\end{theorem}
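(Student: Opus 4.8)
The plan is to show that the $K$-algebra homomorphism $I$ of \eqref{eqn:algebrahomomorphism} restricts to a bijection $K\langle\mathcal{C}\rangle\to\mathcal{I}^{\mathcal{Q}\mathcal{M}}$, establishing surjectivity and injectivity separately. Surjectivity is soft and reduces to an integration-by-parts argument; injectivity is the substantial part and will be obtained from the Deneufch\^atel--Duchamp--Minh--Solomon criterion (Theorem \ref{thm:DDMS}), which I will in turn reduce to a single statement about $\delta$-primitives lying in $K$. For surjectivity, since the unrestricted map \eqref{eqn:algebrahomomorphism} is already surjective, it is enough to prove that every $I(f_1,\ldots,f_n)$ with $f_1,\ldots,f_n\in\mathcal{Q}\mathcal{M}$ lies in $\operatorname{Span}_K\{I(c_1,\ldots,c_m):c_j\in\mathcal{C}\}$. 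I would induct on the length $n$, the case $n=0$ being trivial. Using that $[f_1|\cdots|f_n]\mapsto I(f_1,\ldots,f_n)$ is $K$-multilinear in its letters, I decompose each $f_i=c_i+\delta(g_i)$ with $c_i\in\mathcal{C}$ and $g_i\in\mathcal{Q}\mathcal{M}$ and expand. Terms in which all letters lie in $\mathcal{C}$ are of the required form; for any term still containing a letter $\delta(g_i)$, I apply Corollary \ref{cor:partialintegration} to that slot. Since $\mathcal{Q}\mathcal{M}$ is closed under multiplication and $\mathcal{Q}\mathcal{M}\subset K$, this rewrites the term as a $K$-linear combination of iterated primitives of length $\le n-1$ with letters in $\mathcal{Q}\mathcal{M}$, which lie in the desired span by the induction hypothesis.

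For injectivity, fix a $\mathbb{C}$-basis $\{c_i\}_{i\in I}$ of $\mathcal{C}$; the words $[c_{i_1}|\cdots|c_{i_m}]$ then form a $K$-basis of $K\langle\mathcal{C}\rangle$, whose images under $I$ are the iterated primitives $I(c_{i_1},\ldots,c_{i_m})$. Hence injectivity of $I|_{K\langle\mathcal{C}\rangle}$ is equivalent to the $K$-linear independence of the family of all such iterated primitives. I would obtain this from Theorem \ref{thm:DDMS}, applied to the differential field $K$ and the family $\mathcal{F}=\{c_i\}$: its condition (i) is exactly the independence we want, and its condition (ii) asks that $\{c_i\}$ be $\mathbb{C}$-linearly independent (true, being a basis) and that $\delta(K)\cap\operatorname{Span}_{\mathbb{C}}\{c_i\}=\delta(K)\cap\mathcal{C}=\{0\}$.

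So everything comes down to verifying $\delta(K)\cap\mathcal{C}=\{0\}$. Because $\mathcal{C}\subset\mathcal{Q}\mathcal{M}$, we have $\delta(K)\cap\mathcal{C}=(\delta(K)\cap\mathcal{Q}\mathcal{M})\cap\mathcal{C}$, so it suffices to prove the key identity
\[
\delta(K)\cap\mathcal{Q}\mathcal{M}=\delta(\mathcal{Q}\mathcal{M}),
\]
for then $\delta(K)\cap\mathcal{C}=\delta(\mathcal{Q}\mathcal{M})\cap\mathcal{C}=\{0\}$ by the defining property $\mathcal{C}\oplus\delta(\mathcal{Q}\mathcal{M})=\mathcal{Q}\mathcal{M}$. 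The inclusion $\supseteq$ is clear, and the inclusion $\subseteq$ is precisely the content of Lemma \ref{lem:main}: if $h\in\mathcal{Q}\mathcal{M}$ satisfies $h=\delta(F)$ for some $F\in K$, then a $\delta$-primitive of $h$ already lies in $\mathcal{Q}\mathcal{M}$.

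I expect Lemma \ref{lem:main} to be the main obstacle; the rest of the argument is formal. To prove it I would first separate off the variable $q$, writing $K=L(q)$ with $L=\mathbb{C}(E_2,E_4,E_6)$, and decomposing $\delta=D+q\tfrac{\partial}{\partial q}$ where $D=\delta|_L$ is the Ramanujan derivation preserving $L$. Since $h=\delta(F)\in\mathcal{Q}\mathcal{M}\subset L$ involves no $q$, a pole- and monomial-analysis in $q$ shows that a finite $q$-pole of $F$ at $q=\alpha$ would force $D(\alpha)=\alpha$, while a nonconstant monomial $a_jq^j$ ($j\neq0$) would force $D(a_j)=-ja_j$; as $\delta$ strictly raises the weight, $L$ (and its algebraic closure) admits no $\delta$-eigenfunction with nonzero eigenvalue, so both possibilities are excluded and $F\in L$. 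One is then reduced to the purely modular assertion that $F\in\mathbb{C}(E_2,E_4,E_6)$ with $\delta(F)\in\mathcal{Q}\mathcal{M}=\mathcal{M}[E_2]$ implies $F\in\mathcal{M}[E_2]$. Splitting $F$ into weight-homogeneous components (legitimate since $\delta$ is homogeneous of degree $2$) and then controlling pole orders --- in $E_2$ via $\delta(E_2)=(E_2^2-E_4)/12$, and at the elliptic points and the cusp of $X$ via the valence formula --- should rule out all genuine denominators and force $F$ to be a polynomial in $E_2$ with meromorphic modular coefficients. This pole analysis is the genuinely hard step.
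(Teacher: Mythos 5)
Your overall architecture coincides with the paper's: surjectivity by writing each letter as $f_i=c_i+\delta(g_i)$, expanding by multilinearity, and reducing the length via Corollary \ref{cor:partialintegration}; injectivity via Theorem \ref{thm:DDMS} applied to a $\mathbb{C}$-basis of $\mathcal{C}$, reduced to $\delta(K)\cap\mathcal{C}=\{0\}$ and hence to the key identity $\delta(K)\cap\mathcal{Q}\mathcal{M}=\delta(\mathcal{Q}\mathcal{M})$, which is exactly Lemma \ref{lem:main}. Your first step in proving that lemma (eliminating $q$) is also essentially the paper's: the paper writes a putative primitive as $f/g$ with $g$ monic in $q$ and coprime to $f$, derives $g\mid\delta(g)$, hence $\delta(g)=ng$ and $g=q^n$; your partial-fraction/monomial analysis is the same argument in different clothing. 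One caveat there: the justification ``$\delta$ strictly raises the weight, so no eigenfunctions with nonzero eigenvalue'' is not sound, since non-homogeneous elements of $L=\mathbb{C}(E_2,E_4,E_6)$ carry no weight, and for a pole at $\alpha\in\overline{L}$ you need the exclusion in an algebraic closure. The clean argument is via $q$-expansions (Puiseux series for $\overline{L}$): $\delta(h)=ch$ forces $h\in\mathbb{C}\cdot q^{c}$, which is excluded because $q$ is transcendental over $L$ by the algebraic independence result of \cite{Mahler}. This is repairable, so I do not count it as the main defect.

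The genuine gap is in the second, purely modular step. You propose to ``split $F\in\mathbb{C}(E_2,E_4,E_6)$ into weight-homogeneous components, legitimate since $\delta$ is homogeneous of degree $2$.'' This is false: the fraction field of a graded domain is not graded, and a general rational function is not a sum of homogeneous ones. For instance, $1/(E_4+E_6)$ admits no such decomposition: a sum of homogeneous fractions has a homogeneous common denominator $Q$, so one would need $Q=P\cdot(E_4+E_6)$, impossible because all irreducible factors of a homogeneous polynomial in $\mathbb{C}[E_2,E_4,E_6]$ are homogeneous while $E_4+E_6$ is irreducible and inhomogeneous. So your reduction to homogeneous $F$ collapses at the outset, and with it the plan of bounding pole orders by the valence formula, which moreover says nothing about denominators involving $E_2$ (e.g. one must rule out $F=1/E_2$, whose image $\delta(1/E_2)=-\tfrac{1}{12}+E_4/(12E_2^2)$ is indeed not in $\mathcal{M}[E_2]$, but not by any valence-formula argument). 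What is actually needed, and what the paper supplies, is an algebraic divisibility argument: write $F=f/g$ in lowest terms and $\delta(F)=h_1/h_2$ with $h_1\in\mathbb{C}[E_2,E_4,E_6]$ and $h_2\in\mathbb{C}[E_4,E_6]$ homogeneous; the quotient rule and coprimality give $g\mid\delta(g)h_2$, so by the Leibniz rule every irreducible factor $g'$ of $g$ either divides $h_2$, hence is homogeneous and lies in $\mathbb{C}[E_4,E_6]$, or divides $\delta(g')$, in which case $g'\in\mathbb{C}\cdot\Delta^m$ by \cite[Lemma 2.5]{Matthes:AlgebraicStructure}. Either way $g\in\mathcal{M}_k$ for some $k$, whence $F\in\mathcal{Q}\mathcal{M}$. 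This classification of irreducibles dividing their own $\delta$-derivative is the missing idea; the step you yourself flag as ``the genuinely hard step'' both omits it and starts from a decomposition that does not exist.
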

\begin{proof}
	We first prove surjectivity. Let $I(f_1,\ldots,f_n)\in \mathcal{I}^{\mathcal{Q}\mathcal{M}}$. We employ induction on $n$ to show that $I(f_1,\ldots,f_n) \in I(K\langle \mathcal{C}\rangle)$. By definition of $\mathcal{C}$, for every $1\leq i\leq n$, there exist $g_i \in \mathcal{C}$ and $h_i \in \mathcal{Q}\mathcal{M}$ such that $f_i=g_i+\delta(h_i)$. Using multilinearity of $I$, we deduce that
	\[
	I(f_1,\ldots,f_n)=I(g_1,\ldots,g_n)+\sum_{T\subsetneq \{1,\ldots,n\}}I(f^T_1,\ldots,f^T_n), \qquad \mbox{where } f_i^T:=
	\begin{cases}
		g_i, & \mbox{if }i\in T,\\
		\delta(h_i), & \mbox{if }i\notin T.
	\end{cases}
	\]
	Corollary \ref{cor:partialintegration} now implies that each term in the sum on the right hand side can be rewritten as a $K$-linear combination of iterated primitives of length $\leq n-1$, hence we obtain $I(f_1,\ldots,f_n)\in I(K\langle \mathcal{C}\rangle)$, as desired.
	
	For injectivity, by Theorem \ref{thm:DDMS} it suffices to verify that $\delta(K)\cap \mathcal{C}=\{0\}$, which in turn follows from the next lemma.
	\begin{lemma} \label{lem:main}
		We have $\delta(K)\cap \mathcal{Q}\mathcal{M}=\delta(\mathcal{Q}\mathcal{M})$.
	\end{lemma}
	\begin{proof}
		Let $K':=\operatorname{Frac}(\mathcal{Q}\mathcal{M})=\mathbb C(E_2,E_4,E_6) \subset K$ which is a differential subfield of $K$. We begin by showing that $\delta(K)\cap K'=\delta(K')$, the inclusion "$\supseteq$" being trivial. Let $f,g\in K[q]$ be such that $\delta(f/g) \in K'$. We may assume without loss of generality that $f$ and $g$ have no common factors and that $g=q^n+\sum_{j=0}^{n-1}g_j q^j$ is monic. We claim that $g=q^n$. Indeed, computing $\delta(f/g)$ by the quotient rule and using that $f$ and $g$ have no common factor, it must be the case that $g$ divides $\delta(g)$. Since $\delta(g)$ and $g$ have the same degree $n$ and since $g$ is monic, this implies that $\delta(g)=n\cdot g$, hence $g=q^n$. In particular, $f/g \in K'[q,q^{-1}]$ and since $\delta(q^n)=nq^n$, the fact that $\delta(f/g) \in K'$ readily implies $f/g\in K'$, as desired.
		
		We next show that $\delta(K')\cap \mathcal{Q}\mathcal{M}=\delta(\mathcal{Q}\mathcal{M})$, the inclusion "$\supseteq$" again being trivial. Let $f,g \in \mathbb C[E_2,E_4,E_6]$ be polynomials such that $\delta(f/g) \in \mathcal{Q}\mathcal{M}$, that is, we have $\delta(f/g)=h_1/h_2$ for some $h_1\in \mathbb C[E_2,E_4,E_6]$ and some $h_2\in \mathbb C[E_4,E_6]$, where $h_2$ is homogeneous for the weight. It is enough to prove that $g \in \mathcal{M}_k$ for some $k\in \mathbb Z$, and it is no loss of generality to assume that $f$ and $g$ have no common factors. Applying the quotient rule to compute $\delta(f/g)$, multiplying through by $h_2$ and reorganizing terms, we deduce that
		\[
		f\cdot \frac{\delta(g)}{g}\cdot h_2 \in \mathbb C[E_2,E_4,E_6],
		\]
		hence that $g$ divides $\delta(g)h_2$, as $f$ and $g$ have no common factor. We shall prove that this implies that $g \in \mathcal{M}_k$ for some $k$. Indeed, by the Leibniz rule each irreducible factor $g'$ of $g$ divides $\delta(g')h_2$, so it is enough to prove the claim for $g$ irreducible. If $g$ divides $h_2$, then we are done, since $h_2\in \mathbb C[E_4,E_6]$ is homogeneous by assumption, hence so is each of its factors. If $g$ does not divide $h_2$, then $g$ must divide $\delta(g)$ in $\mathbb C[E_2,E_4,E_6]$ in which case it is well-known that $g\in \mathbb C\cdot \Delta^m$ for some integer $m\geq 0$ (see for example \cite[Lemma 2.5]{Matthes:AlgebraicStructure}). Hence we obtain that $f/g\in \mathcal{Q}\mathcal{M}$, as desired.
	\end{proof}
	This ends the proof of Theorem \ref{thm:main}.
\end{proof}
We next give a basis of $\mathcal{I}^{\mathcal{Q}\mathcal{M}}$ in terms of an ordered $\mathbb C$-basis $B=(b_i)_{i\in I}$ of $\mathcal{C}$. Consider the free monoid $B^*$, that is, the set of all words in $B$ including the empty word $\emptyset$. A \emph{Lyndon word} is a nonempty word $w\in B^*$ such that for all factorizations $w=uv$ into nonempty words $u,v$, it holds that $w<v$, where "$<$" denotes the lexicographic order on $B^*$ induced from the order on $B$. We let $L(B)\subset B^*$ denote the subset of all Lyndon words. The following result is a special case of Radford's theorem, \cite[Theorem 3.1.1]{Radford}.
\begin{theorem}[Radford] \label{thm:Radford}
	The set 
	\begin{equation} \label{eqn:polynomialbasis}
		\{[b_{i_1}|\ldots|b_{i_n}] \, : \, w=b_{i_1}\ldots b_{i_n}\in L(B)\}
	\end{equation}
	is a polynomial basis of $K\langle \mathcal{C}\rangle$: every element of $K\langle \mathcal{C}\rangle$ can be written uniquely as a $K$-linear combination of shuffle products of elements of \eqref{eqn:polynomialbasis}.
\end{theorem}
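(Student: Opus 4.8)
The plan is to prove the classical theorem of Radford that the shuffle algebra on an ordered alphabet is the free commutative (polynomial) algebra on its Lyndon words. Everything happens at the level of the alphabet $B$ and of rational coefficients, the field $K$ (of characteristic zero) merely riding along under extension of scalars, so it is harmless to think of the ground ring as $\mathbb Q$. Writing $[w]:=[b_{i_1}|\ldots|b_{i_n}]$ for a word $w=b_{i_1}\cdots b_{i_n}\in B^*$, the assertion is equivalent to showing that the shuffle monomials
\[
[\ell_1]^{\shuffle a_1}\shuffle\cdots\shuffle[\ell_r]^{\shuffle a_r},\qquad \ell_1>\cdots>\ell_r\in L(B),\ a_i\geq 1,
\]
together with the empty monomial, form a $K$-basis of $K\langle\mathcal C\rangle$: a subset of a commutative algebra whose monomials form a vector-space basis is, by definition, a set of free polynomial generators.

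The first ingredient is the Chen--Fox--Lyndon theorem: every word $w\in B^*$ has a unique factorization $w=\ell_1\cdots\ell_n$ as a nonincreasing concatenation of Lyndon words $\ell_1\geq\cdots\geq\ell_n$. Collecting equal factors, this is a bijection between $B^*$ and the set of finite multisets of Lyndon words, i.e.\ with the monomials in $L(B)$; under it a word $w$ corresponds to the monomial $\prod_i[\ell_i]^{\shuffle a_i}$, where $\ell_1>\cdots>\ell_r$ are the distinct Lyndon factors and $a_i$ their multiplicities. Thus there are exactly as many shuffle monomials as there are words, and it will suffice to prove that the transition between the two families is invertible.

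The heart of the matter is the triangularity lemma: with notation as above (so that $w=\ell_1^{a_1}\cdots\ell_r^{a_r}$ as a concatenation),
\[
[\ell_1]^{\shuffle a_1}\shuffle\cdots\shuffle[\ell_r]^{\shuffle a_r}=\Big(\prod_{i=1}^r a_i!\Big)[w]+\sum_{\substack{u\sim w\\ u<_{\mathrm{lex}}w}}c_u\,[u],\qquad c_u\in\mathbb Q,
\]
where $u\sim w$ means that $u$ is a rearrangement of the letters of $w$. In other words, $[w]$ is the lexicographically largest word occurring, and it occurs with the positive integer coefficient $\prod_i a_i!$. I would prove this by induction on the length of $w$, using that the shuffle product preserves the multiset of letters together with the combinatorial fact that the Lyndon property pins down the maximal interleaving of the largest factor with the rest. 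I expect this lemma to be the main obstacle: the delicate points are to identify the leading word correctly and to account for the multinomial factor $\prod_i a_i!$ produced by repeated Lyndon factors, and essentially all of the genuine work is concentrated here.

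Granting the lemma, the theorem follows at once. Each letter-content class $\{u:u\sim w\}$ is finite, and after dividing each shuffle monomial by its leading coefficient $\prod_i a_i!$ (invertible because $\operatorname{char}K=0$) the passage from shuffle monomials to the word basis $\{[u]\}$ is unitriangular for the lexicographic order, the leading words being pairwise distinct by uniqueness of the Chen--Fox--Lyndon factorization. Hence the shuffle monomials are $K$-linearly independent and spanning, so they form a basis, which is precisely the statement that $K\langle\mathcal C\rangle$ is the polynomial algebra on $L(B)$. I note an alternative, more in the spirit of the Cartier--Milnor--Moore theorem cited in the introduction: the deconcatenation coproduct makes $K\langle\mathcal C\rangle$ a graded connected commutative Hopf algebra, hence free commutative by the Leray--Milnor--Moore theorem, after which one identifies explicit generators with Lyndon words through the dual Poincar\'e--Birkhoff--Witt basis of the free Lie algebra; the combinatorial argument above is, however, the more elementary and self-contained route.
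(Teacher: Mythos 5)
The paper offers no internal proof of this statement: it is quoted as a special case of Radford's theorem with a citation to \cite[Theorem 3.1.1]{Radford} (with \cite{Reutenauer} for background on shuffle algebras), so the comparison is really between your argument and the classical proof underlying that citation --- and your argument is, in substance, exactly that classical proof. Your two ingredients are the standard ones: the Chen--Fox--Lyndon unique factorization, which puts shuffle monomials in Lyndon words in bijection with all words of $B^*$, and the triangularity lemma asserting that $[\ell_1]^{\shuffle a_1}\shuffle\cdots\shuffle[\ell_r]^{\shuffle a_r}$ equals $\bigl(\prod_i a_i!\bigr)[w]$ plus lexicographically smaller words of the same letter content, where $w=\ell_1^{a_1}\cdots\ell_r^{a_r}$ is the nonincreasing concatenation; this is precisely Theorem 6.1 of Reutenauer's book, and you state it correctly, including the leading coefficient $\prod_i a_i!$ and the need for $\operatorname{char}K=0$ to invert it. Your deduction from the lemma is sound: since the shuffle product preserves letter content, each content class is finite, the change of basis from shuffle monomials to words is triangular with nonzero diagonal within each class, and the leading words are pairwise distinct by uniqueness of the factorization, so the shuffle monomials form a $K$-basis, which is what it means for \eqref{eqn:polynomialbasis} to be a polynomial basis. (The extension of scalars from $\mathbb C$ to $K$ and the possible infiniteness of $B$ are harmless, as you note, again because everything decomposes into finite content classes.) The one place where real work remains is the induction proving the triangularity lemma itself: your sketch correctly identifies the mechanism --- a Lyndon word is smaller than each of its proper suffixes, which is what controls the maximal interleaving --- but as written this step is a plan rather than a proof; since the paper outsources exactly this point to Radford, your proposal is at least as complete as the paper's treatment. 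Your alternative route via the deconcatenation coproduct, freeness of graded connected commutative Hopf algebras, and duality with the Poincar\'e--Birkhoff--Witt basis of the free Lie algebra is also viable and is in fact closer in spirit to the Cartier--Milnor--Moore remark in the paper's introduction, though less self-contained than the combinatorial argument.
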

Combined with Theorem \ref{thm:main}, this gives an explicit algebra basis of $\mathcal{I}^{\mathcal{Q}\mathcal{M}}$. As a consequence, we obtain the following algebraic independence criterion. We first need some more notation. For $f=\sum_{n>\!\!\!>-\infty}a_nq^n\in \mathcal{Q}\mathcal{M}$ and $r\geq 1$, we let
\[
I^r(f):=I(\underbrace{1,\ldots,1}_{r-1},f)
\]
denote the $r$-fold primitive of $f$. Note that
\[
I^r(f)=\frac{a_0t^r}{r!}+\sum_{n\neq 0}\frac{a_n}{n^r}q^n.
\]
\begin{theorem} \label{thm:criterion}
	Given $f_0,f_1,\ldots,f_n \in \mathcal{Q}\mathcal{M}$ with $f_0=1$, the subset
	\[
	\{t\}\cup \bigcup_{r\geq 1}\{I^r(f_i) \, : \, i\in \{1,\ldots,n\}\} \subset \mathcal{I}^{\mathcal{Q}\mathcal{M}}
	\]
	is algebraically independent over $K$, if and only if the equivalence classes of the $f_0,\ldots,f_n$ in $\mathcal{Q}\mathcal{M}/\delta(\mathcal{Q}\mathcal{M})$ are $\mathbb C$-linearly independent.
\end{theorem}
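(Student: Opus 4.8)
The plan is to transport the statement, via the isomorphism of Theorem~\ref{thm:main}, into the polynomial $K$-algebra $K\langle\mathcal{C}\rangle$ of Radford's Theorem~\ref{thm:Radford}, and to read off (in)dependence from the Lyndon-word generators. Two features of the hypothesis $f_0=1$ will be used repeatedly: first, $I^1(f_0)=I(1)=t$, so $t$ is the length-one primitive attached to the class $[f_0]$; second, $I^r(1)=t^r/r!$, so all higher primitives of $f_0$ are powers of $t$. This is exactly why the set contains the single element $t$ in place of a whole family $\{I^r(f_0)\}_{r\ge1}$.

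I would prove the ``only if'' direction by contraposition. Suppose the classes are $\mathbb{C}$-linearly dependent, say $\sum_{i=0}^n\lambda_i f_i=\delta(h)$ with $h\in\mathcal{Q}\mathcal{M}$ and $(\lambda_0,\dots,\lambda_n)\neq0$. By $\mathbb{C}$-linearity of $I$ and Proposition~\ref{prop:iteratedprimitivepropsIII}.(i), the element $\sum_i\lambda_i I^1(f_i)=I(\sum_i\lambda_i f_i)=I(\delta(h))$ is a primitive of $\delta(h)$, hence equals $h-c$ for some constant $c\in\mathbb{C}$; and $h-c\in\mathcal{Q}\mathcal{M}\subset K$. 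Since $I^1(f_0)=t$, this is a nontrivial $K$-linear (in particular algebraic) relation among the elements $t,I^1(f_1),\dots,I^1(f_n)$ of our set, so the set is algebraically dependent over $K$.

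For the ``if'' direction, assume $[f_0],\dots,[f_n]$ are $\mathbb{C}$-linearly independent in $\mathcal{Q}\mathcal{M}/\delta(\mathcal{Q}\mathcal{M})\cong\mathcal{C}$. I would choose the complement $\mathcal{C}$ and an ordered basis $B$ of it so that $c_0:=1,c_1,\dots,c_n\in\mathcal{C}$ are representatives of $[f_0],\dots,[f_n]$ and $c_0$ is the smallest letter of $B$. Writing $f_i=c_i+\delta(h_i)$ with $h_i\in\mathcal{Q}\mathcal{M}$ and iterating the last-slot case of Corollary~\ref{cor:partialintegration} (using $I^j(1)=t^j/j!$), I would obtain an expansion
\[
I^r(f_i)=I^r(c_i)+\sum_{s=1}^{r-1}I^s(\tilde c_{i,s})+P_{r,i}(t)+\kappa_{r,i},
\]
with $\tilde c_{i,s}\in\mathcal{C}$, $P_{r,i}\in\mathbb{C}[t]$, and $\kappa_{r,i}\in\mathcal{Q}\mathcal{M}\subset K$, in which the unique top-length term is $I^r(c_i)$, with coefficient $1$. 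Under Theorem~\ref{thm:main}, each $I^s(c)$ with $c\in B$ is the image of the word $[c_0|\dots|c_0|c]$ with $s-1$ letters $c_0$; since $c_0$ is minimal this word is Lyndon whenever $c\neq c_0$, while for $c=c_0$ it gives the power $t^s/s!$. Thus every $I^s(c)$ is, up to powers of $t$, one of Radford's polynomial generators.

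It then remains to establish algebraic independence inside the polynomial algebra $K\langle\mathcal{C}\rangle$. For this I would use the standard fact that elements of a polynomial $K$-algebra are algebraically independent over $K$ as soon as their images in the indecomposables $\mathfrak{m}/\mathfrak{m}^2$ (with $\mathfrak{m}$ the augmentation ideal, and $\mathfrak{m}/\mathfrak{m}^2$ spanned by the images of the Lyndon words) are $K$-linearly independent, which follows by comparing lowest-degree parts of a hypothetical relation in $\operatorname{gr}_{\mathfrak{m}}K\langle\mathcal{C}\rangle\cong\operatorname{Sym}(\mathfrak{m}/\mathfrak{m}^2)$. After subtracting the constants $\kappa_{r,i}$ so that all elements lie in $\mathfrak{m}$, the image of $I^r(f_i)$ equals $\overline{c_0^{\,r-1}c_i}$ plus a combination of $\overline{c_0^{\,s-1}c}$ with $s<r$ and a multiple of $\overline{c_0}$; a leading-term argument, triangular in the length $r$, then yields $K$-linear independence of these images together with the image $\overline{c_0}$ of $t$ (at the maximal occurring length $R$ only the $I^R(f_i)$ contribute, and the distinct basis vectors $\overline{c_0^{\,R-1}c_i}$ appear with exactly the relation's coefficients, forcing them to vanish). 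The main obstacle is precisely this step: eliminating $\delta(h_i)$ introduces corrections $I^s(\tilde c_{i,s})$ that are iterated primitives of \emph{new} quasimodular forms, so one cannot confine the analysis to a fixed finite generating set, and it is the triangular leading-term analysis, isolating $I^r(c_i)$ and exploiting its identification with a Lyndon word, that resolves the difficulty.
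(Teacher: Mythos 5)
Your proposal is correct, and its skeleton---transport the problem through the isomorphism of Theorem~\ref{thm:main} into $K\langle\mathcal{C}\rangle$, order the letters so that $c_0=1$ is minimal, and conclude from Radford's Theorem~\ref{thm:Radford} via the Lyndon words $[c_0|\cdots|c_0|c_i]$---is the same as the paper's; the ``only if'' direction is essentially identical to the paper's one-line argument. Where you diverge is in the ``if'' direction, and the divergence is instructive: the paper exploits the freedom in choosing the complement one step further than you do. Since the hypothesis gives $\operatorname{Span}_\mathbb{C}\{f_0,\ldots,f_n\}\cap\delta(\mathcal{Q}\mathcal{M})=\{0\}$, one may choose $\mathcal{C}$ to contain $f_0,\ldots,f_n$ \emph{themselves}, not merely representatives $c_i$ of their classes. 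Then $h_i=0$, each $I^r(f_i)$ is literally the image under $I$ of the word $[f_0|\cdots|f_0|f_i]$, these are distinct Lyndon words for the order $f_0<f_1<\cdots<f_n$, and algebraic independence is immediate from Theorem~\ref{thm:Radford}---no correction terms, no graded argument. The ``main obstacle'' you identify, namely the new primitives $I^s(\tilde c_{i,s})$ produced when eliminating $\delta(h_i)$ via Corollary~\ref{cor:partialintegration}, is therefore self-inflicted rather than intrinsic. That said, your resolution of it is sound: the expansion of $I^r(f_i)$ is correct; the images in $\mathfrak{m}/\mathfrak{m}^2$ are as you describe (basis vectors $\overline{[c_0^{s-1}b]}$ for letters $b\neq c_0$, with the powers of $t$ contributing only along $\overline{[c_0]}$, since $[c_0^{\,s}]=[c_0]^{\shuffle s}/s!\in\mathfrak{m}^s$); the criterion ``$K$-linear independence in $\mathfrak{m}/\mathfrak{m}^2$ implies algebraic independence over $K$'' is valid in characteristic zero via $\operatorname{gr}_{\mathfrak{m}}K\langle\mathcal{C}\rangle\cong\operatorname{Sym}(\mathfrak{m}/\mathfrak{m}^2)$; and the descending-in-length triangular argument closes the loop. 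What your longer route buys is robustness (it works with an arbitrary complement containing $1$ as minimal letter) and explicitness: it spells out the passage from Radford's polynomial basis to algebraic independence, which the paper leaves implicit (the paper's phrase ``$K$-linearly independent'' really amounts to the words being distinct members of the Lyndon polynomial basis, hence algebraically independent). What the paper's route buys is a three-line proof.
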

\begin{proof}
	If the classes of the $f_0,\ldots,f_n$ in $\mathcal{Q}\mathcal{M}/\delta(\mathcal{Q}\mathcal{M})$ are $\mathbb C$-linearly dependent, then there exist $\alpha_0,\ldots,\alpha_n \in \mathbb C$, not all equal to zero, and $g\in \mathcal{Q}\mathcal{M}$ such that $\sum_{i=0}^n\alpha_if_i=\delta(g)$. Applying $I$ to both sides then gives a nontrivial relation. Conversely, if $f_0,\ldots,f_n$ are $\mathbb C$-linearly independent modulo $\delta(\mathcal{Q}\mathcal{M})$, then $\operatorname{Span}_\mathbb C\{f_0,\ldots,f_n\} \cap \delta(\mathcal{Q}\mathcal{M})=\{0\}$. Hence there exists a complement $\mathcal{C}$ that contains $\operatorname{Span}_\mathbb C\{f_0,\ldots,f_n\}$, and by Theorem \ref{thm:main} it is enough to show that the subset
	\[
	\{[f_0]\} \cup \bigcup_{r\geq 1}\{[\underbrace{f_0|\ldots|f_0|}_{r-1}f_i] \, : \, i\in \{1,\ldots,n\}\} \subset K\langle \mathcal{C}\rangle
	\]
	is $K$-linearly independent. Since this set consists of Lyndon words (for the order $f_i<f_j$ if and only if $i<j$), the $K$-linear independence follows from Theorem \ref{thm:Radford}.
\end{proof}

\section{An explicit complement} \label{sec:6}

The remainder of this paper is devoted to constructing an explicit complement of $\delta(\mathcal{Q}\mathcal{M})$.

\subsection{Quotient by derivatives}
We begin by computing the quotient $\overline{\mathcal{Q}\mathcal{M}}:=\mathcal{Q}\mathcal{M}/\delta(\mathcal{Q}\mathcal{M})$. Since $\delta$ is homogeneous of weight $2$, the weight grading $\mathcal{Q}\mathcal{M}=\bigoplus_{k\in \mathbb Z}\mathcal{Q}\mathcal{M}_k$ descends to a grading $\overline{\mathcal{Q}\mathcal{M}}=\bigoplus \overline{\mathcal{Q}\mathcal{M}}_k$, where $\overline{\mathcal{Q}\mathcal{M}}_k:=\mathcal{Q}\mathcal{M}_k/\delta(\mathcal{Q}\mathcal{M}_{k-2})$. Let $\pi_k: \mathcal{M}_k\rightarrow \overline{\mathcal{Q}\mathcal{M}}_k$
be the composition of the natural inclusion $\mathcal{M}_k \subset \mathcal{Q}\mathcal{M}_k$ with the canonical projection onto $\overline{\mathcal{Q}\mathcal{M}}_k$. Note that $\ker(\pi_k)=\delta(\mathcal{Q}\mathcal{M}_{k-2})\cap \mathcal{M}_k$.
\begin{theorem} \label{thm:cokerdelta}
	The following statements hold.
	\begin{itemize}
		\item[(i)]
		We have
		\[
		\ker(\pi_k)\cong
		\begin{cases} 0, & \mbox{if }k\leq 1,\\
			\delta^{k-1}(\mathcal{M}_{2-k}), & \mbox{if }k\geq 2.
		\end{cases}
		\]
		\item[(ii)]
		We have
		\[
		\operatorname{coker}(\pi_k)\cong
		\begin{cases} 0, & \mbox{if }k\leq 1,\\
			\mathcal{M}_{2-k}\cdot E_2^{k-1}, & \mbox{if }k\geq 2.
		\end{cases}
		\]
	\end{itemize}
	In particular, we have
	\[
	\overline{\mathcal{Q}\mathcal{M}}_k\cong\left\{
	\begin{array}{cc}
		\displaystyle \mathcal{M}_k, & k\leq 1,\\
		\displaystyle
		\mathcal{B}_k\oplus \mathcal{M}_{2-k}\cdot E_2^{k-1}, & k\geq 2,\\
	\end{array}\right.
	\]
	where $\mathcal{B}_k:=\mathcal{M}_k/\delta^{k-1}(\mathcal{M}_{2-k})$.
\end{theorem}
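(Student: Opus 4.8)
The plan is to make $\delta$ completely explicit in the coordinates of \eqref{eqn:polynomialinE2} and to read off both the kernel and the cokernel from the behaviour of the top $E_2$-coefficient. Odd $k$ is trivial (all spaces $\mathcal{M}_{k-2r}$, hence $\mathcal{Q}\mathcal{M}_k$, vanish), so assume $k$ even. Writing $f=\sum_{r=0}^{p}g_rE_2^r\in\mathcal{Q}\mathcal{M}_{k-2}$ with $g_r\in\mathcal{M}_{k-2-2r}$ and $g_p\neq0$, a short computation from $\delta(E_2)=\tfrac{E_2^2-E_4}{12}$ and the modularity of the Serre derivative $\delta(g)-\tfrac{\operatorname{wt}(g)}{12}E_2g\in\mathcal{M}_{\operatorname{wt}(g)+2}$ gives
\[
\delta(f)=\frac{k-2-p}{12}\,g_p\,E_2^{p+1}+\bigl(\text{terms of depth}\le p\bigr).
\]
Everything hinges on the observation that the leading coefficient $\tfrac{k-2-p}{12}$ vanishes exactly at the Bol spot $p=k-2$, where the depth $p$ equals the weight $k-2$ of $f$: away from it $\delta$ raises the depth by one and is invertible on the top graded piece, while there it fails to do so.

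For statement (ii) I would first record the Lemma that $\operatorname{depth}(\delta(f))\neq k-1$ for every $f\in\mathcal{Q}\mathcal{M}_{k-2}$: if $p\neq k-2$ the displayed leading term is nonzero and $\operatorname{depth}(\delta(f))=p+1\neq k-1$, while if $p=k-2$ the leading term drops out and $\operatorname{depth}(\delta(f))\le k-2$. Since $\operatorname{coker}(\pi_k)=\mathcal{Q}\mathcal{M}_k/\bigl(\delta(\mathcal{Q}\mathcal{M}_{k-2})+\mathcal{M}_k\bigr)$, this Lemma makes the map $\mathcal{M}_{2-k}E_2^{k-1}\to\operatorname{coker}(\pi_k)$ injective: a relation $mE_2^{k-1}=\delta(f)+m_0$ with $m_0\in\mathcal{M}_k$ forces $\delta(f)$ to have depth $\le k-1$, hence $\le k-2$, so its $E_2^{k-1}$-coefficient $m$ vanishes. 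Surjectivity amounts to $\mathcal{Q}\mathcal{M}_k=\delta(\mathcal{Q}\mathcal{M}_{k-2})+\mathcal{M}_k+\mathcal{M}_{2-k}E_2^{k-1}$, proved by induction on the depth $q$: for $h$ of depth $q\ge1$ with $q\neq k-1$ I subtract $\delta\bigl(\tfrac{12}{k-1-q}h_qE_2^{q-1}\bigr)$ to cancel the top coefficient and lower the depth, whereas for $q=k-1$ the top term already lies in $\mathcal{M}_{2-k}E_2^{k-1}$ and the inductive hypothesis applies to $h-h_{k-1}E_2^{k-1}$. When $k\le1$ the value $q=k-1$ never occurs, the induction terminates in $\mathcal{M}_k$, and the cokernel vanishes.

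For statement (i) write $\ker(\pi_k)=\delta(\mathcal{Q}\mathcal{M}_{k-2})\cap\mathcal{M}_k=\delta(V)$ with $V:=\{f\in\mathcal{Q}\mathcal{M}_{k-2}:\delta(f)\in\mathcal{M}_k\}$. Bol's identity $\delta^{k-1}(\mathcal{M}_{2-k})\subseteq\mathcal{M}_k$ shows $\delta^{k-2}(\mathcal{M}_{2-k})\subseteq V$, giving the inclusion $\delta^{k-1}(\mathcal{M}_{2-k})\subseteq\ker(\pi_k)$. For the reverse inclusion I would prove $V=\delta^{k-2}(\mathcal{M}_{2-k})$ when $k>2$ (with $V=\mathcal{M}_0=\delta^0(\mathcal{M}_0)$ when $k=2$ and $V=0$ when $k\le1$). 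Indeed, if $0\neq f\in V$ then $\delta(f)$ has depth $0$, so in particular its $E_2^{p+1}$-coefficient $\tfrac{k-2-p}{12}g_p$ vanishes; as $g_p\neq0$ this forces the depth $p$ of $f$ to equal $k-2$. An iteration of the leading-term formula shows $\delta^{k-2}\colon\mathcal{M}_{2-k}\to\mathcal{Q}\mathcal{M}_{k-2}$ is injective with image of depth exactly $k-2$ and nonzero leading coefficient, no leading coefficient vanishing along the way because the Bol spot (weight $=$ depth) is first reached only at the $(k-1)$-st, and hence unused, application of $\delta$. Subtracting the suitable multiple of $\delta^{k-2}(g_{k-2})$ from $f$ therefore yields an element of $V$ of depth $<k-2$, which by the previous sentence must be $0$; hence $f\in\delta^{k-2}(\mathcal{M}_{2-k})$ and $\ker(\pi_k)=\delta(V)=\delta^{k-1}(\mathcal{M}_{2-k})$.

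The final ``in particular'' is then formal: by (i) the image of $\pi_k$ is $\mathcal{M}_k/\delta^{k-1}(\mathcal{M}_{2-k})=\mathcal{B}_k$ and by (ii) its cokernel is $\mathcal{M}_{2-k}E_2^{k-1}$, so the resulting short exact sequence $0\to\mathcal{B}_k\to\overline{\mathcal{Q}\mathcal{M}}_k\to\mathcal{M}_{2-k}E_2^{k-1}\to0$ of $\mathbb C$-vector spaces splits; for $k\le1$ both kernel and cokernel vanish and $\pi_k$ is an isomorphism onto $\mathcal{M}_k$. I expect the kernel descent to be the main obstacle: precisely because the leading coefficient vanishes at $p=k-2$, the operator $\delta$ is not injective on the top graded piece there, so neither kernel nor cokernel can be read off naively from the associated graded, and the infinite-dimensionality of the spaces $\mathcal{M}_j$ forbids a dimension count. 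The leading-term bookkeeping above is what replaces such counting, turning the single fact that $\delta$ raises the depth by one away from the Bol spot and not at it into the full computation.
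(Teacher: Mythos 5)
Your proof is correct and takes essentially the same route as the paper: the same leading-coefficient formula $\delta(f)=\tfrac{k-2-p}{12}\,g_p\,E_2^{p+1}+(\text{lower depth})$, the same depth-lowering subtraction $\delta\bigl(\tfrac{12}{k-1-q}h_qE_2^{q-1}\bigr)$ with the special role of depth $q=k-1$ for the cokernel, and the same descent to $\delta^{k-1}(\mathcal{M}_{2-k})$ for the kernel. The only cosmetic difference is that you work throughout in the $E_2$-coordinates of \eqref{eqn:polynomialinE2} via the Serre derivative and organize the kernel computation as a depth descent against $\delta^{k-2}(\mathcal{M}_{2-k})$, whereas the paper uses the coefficient functions of the transformation law \eqref{eqn:differentiation} and solves the resulting recursion $\delta(g_r)=-\tfrac{k-r-1}{12}g_{r-1}$ directly.
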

\begin{proof}
	For (i), the inclusion "$\supseteq$" is a direct consequence of Bol's identity, \cite{Bol}, so that it remains to prove the converse inclusion. Let $f\in \delta(\mathcal{Q}\mathcal{M}_{k-2})\cap \mathcal{M}_k$ and $g\in \mathcal{Q}\mathcal{M}_{k-2}$ be such that $f=\delta(g)$, and let $g_0,\ldots,g_p$ denote the coefficient functions of $g$. Using equation \eqref{eqn:differentiation}, we compute that
	\begin{equation} \label{eqn:recursion}	f=\sum_{r=0}^{p+1}\left(\delta(g_r)+\frac{k-2-r+1}{12}g_{r-1}\right)X_\gamma^r.
	\end{equation}
	The leading coefficient, $\frac{k-2-p}{12}g_p$, must vanish by uniqueness of the coefficient functions, hence $p=k-2$ or $g_p=0$. If $k\leq 1$, the first case cannot happen and we have $g_p=0$. By recursion, we deduce that $g_r=0$ for all $r$, hence that $g=0$. If $k\geq 2$, we may assume without loss of generality that $g_p\neq 0$, hence that $p=k-2$ by uniqueness. By recursion on $r$, we deduce from \eqref{eqn:recursion} that $\delta(g_r)=-\frac{k-r-1}{12}g_{r-1}$, for all $1\leq r\leq p$. The statement follows from this since $g=g_0$ and $g_p\in \mathcal{M}_{2-k}$.
	
	We next prove (ii). Let $f\in \mathcal{Q}\mathcal{M}_k^{\leq p}$ with coefficient functions $f_0,\ldots,f_p$. By \eqref{eqn:polynomialinE2}, we can write $f=\sum_{r=0}^p \overline{f}_r\cdot E_2^r$ for uniquely determined $\overline{f}_r \in \mathcal{M}_{k-2r}$, and we have $\overline{f}_p=f_p$. Now define $h=\delta(\overline{f}_p\cdot E_2^{p-1}) \in \mathcal{Q}\mathcal{M}^{\leq p}_k$ and denote its coefficient functions by $h_0,\ldots,h_p$. Using equations \eqref{eqn:differentiation} and \eqref{eqn:E2quasimodular}, we compute that $h_p=\frac{k-p-1}{12}\overline{f}_p$, hence if $p\neq k-1$, we see that $\overline{f}_p\cdot E_2^p-\frac{12}{k-p-1}h \in \mathcal{Q}\mathcal{M}^{\leq p-1}_k$. By recursion on the depth, we deduce that the natural map $\mathcal{Q}\mathcal{M}_k^{\leq q}\rightarrow \overline{\mathcal{Q}\mathcal{M}}_k$ is surjective for $q>k-1$, and that the images of $\mathcal{Q}\mathcal{M}_k^{\leq k-2}$ and $\mathcal{M}_k$ inside of $\overline{\mathcal{Q}\mathcal{M}}_k$ are both equal. It remains to compute the image of $\mathcal{Q}\mathcal{M}_k^{\leq p}$ in $\overline{\mathcal{Q}\mathcal{M}}$ in the case $p=k-1$. Let $\widetilde{h}\in \delta(\mathcal{Q}\mathcal{M}_{k-2})$ be such that $\overline{f}_p\cdot E_2^{k-1}-\widetilde{h} \in \mathcal{Q}\mathcal{M}_k^{\leq k-2}$. This necessarily implies $\widetilde{h}_q=0$, for all $q>k-1$, and using equation \eqref{eqn:differentiation} in addition, we deduce that $\widetilde{h}_{k-1}=0$. Therefore $\overline{f}_p\cdot E_2-\widetilde{h}$ has depth $\leq k-2$ if and only if $\overline{f}_p=0$, ending the proof.	
	
\end{proof}

\subsection{A splitting}

We next compute a splitting of the projection $\mathcal{M}_k\rightarrow \mathcal{B}_k$ where $\mathcal{B}_k$ was introduced in Theorem \ref{thm:cokerdelta}.
\begin{definition}
	For an integer $k\geq 2$, define $\widetilde{\mathcal{M}}_k\subset \mathcal{M}_k$ to be the subspace of those meromorphic modular forms $f\in \mathcal{M}_k$ which satisfy $v_\infty(f)\geq -\dim S_k$, and $v_P(f)\geq 1-k$ for all $P\in X\setminus \{\infty\}$.
\end{definition}
\begin{theorem} \label{thm:splitting}
	The natural projection $\varphi_k: \widetilde{\mathcal{M}}_k\rightarrow \mathcal{B}_k$ is an isomorphism.
\end{theorem}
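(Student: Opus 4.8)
The plan is to prove separately that $\varphi_k$ is injective and surjective, the common engine being a precise understanding of how $\delta^{k-1}$ acts on valuations. The basic fact I would establish first is purely local: for any $g\in\mathcal{M}_{2-k}$ and any $p\in\mathfrak{H}$ with $v_{[p]}(g)<0$, one has $v_{[p]}(\delta^{k-1}(g))=v_{[p]}(g)-(k-1)$. This is because $\delta=\frac{1}{2\pi i}\frac{d}{d\tau}$ is a $\tau$-derivative and $v_{[p]}$ is the order of vanishing in the coordinate $\tau-p$, so applying $\frac{d^{k-1}}{d\tau^{k-1}}$ to a leading term $c(\tau-p)^{n_0}$ with $n_0<0$ yields $c\prod_{j=0}^{k-2}(n_0-j)\,(\tau-p)^{n_0-k+1}$ with nonzero coefficient (none of $n_0,\dots,n_0-k+2$ lies in $\{0,\dots,k-2\}$). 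In particular the elliptic points require no special treatment, since the $h_P$ only enter the divisor and not $v_{[p]}$ itself. At the cusp the analogue is immediate from $\delta^{k-1}(q^n)=n^{k-1}q^n$, giving $v_\infty(\delta^{k-1}(g))=v_\infty(g)$ whenever $v_\infty(g)\neq 0$.

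For surjectivity, given $f\in\mathcal{M}_k$ I would clear the excess poles by subtracting elements of $\delta^{k-1}(\mathcal{M}_{2-k})$, treating the finite poles first. As long as some $P\in X\setminus\{\infty\}$ has $v_P(f)\leq -k$, Lemma \ref{lem:construction}(i) produces $g\in\mathcal{M}_{2-k}$ with $v_P(g)=v_P(f)+k-1\leq -1$ and $v_Q(g)\geq 0$ for all other finite $Q$; by the local computation $\delta^{k-1}(g)$ then has $v_P(\delta^{k-1}(g))=v_P(f)$ and is holomorphic at every other finite point, so subtracting a suitable scalar multiple strictly raises $v_P(f)$ without disturbing the other finite valuations (it may worsen the pole at $\infty$, which is irrelevant at this stage). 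After finitely many steps $v_P(f)\geq 1-k$ for all finite $P$. I then clear the cusp: as long as $v_\infty(f)\leq -\dim S_k-1$, Lemma \ref{lem:construction}(ii) supplies $g\in\mathcal{M}_{2-k}$ with $v_\infty(g)=v_\infty(f)<0$ and holomorphic at all finite points, so $\delta^{k-1}(g)$ matches the leading pole of $f$ at $\infty$ and is holomorphic on $\mathfrak{H}$; subtracting a multiple raises $v_\infty(f)$ while leaving all finite valuations untouched. After finitely many steps $f$ lands in $\widetilde{\mathcal{M}}_k$, so $\varphi_k$ is onto.

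For injectivity I would show $\widetilde{\mathcal{M}}_k\cap\delta^{k-1}(\mathcal{M}_{2-k})=\{0\}$. Suppose $f=\delta^{k-1}(g)\in\widetilde{\mathcal{M}}_k$ with $g\in\mathcal{M}_{2-k}$. If $g$ had a pole at a finite $P$, the local computation would give $v_P(f)=v_P(g)-(k-1)\leq -k$, contradicting $v_P(f)\geq 1-k$; hence $g$ is holomorphic on $\mathfrak{H}$, and the valence formula for $g$ (of weight $2-k\leq 0$) reads
\[
v_\infty(g)+\frac{v_{[i]}(g)}{2}+\frac{v_{[e^{2\pi i/3}]}(g)}{3}+\!\!\sum_{P\neq[i],[e^{2\pi i/3}],\infty}\!\!v_P(g)=\frac{2-k}{12},
\]
with all finite orders $\geq 0$. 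Writing the finite contribution as $A/6$ with $A=3v_{[i]}(g)+2v_{[e^{2\pi i/3}]}(g)+6\sum_P v_P(g)$, I note that $A$ is a non-negative integer never equal to $1$ (as $2\mathbb{Z}_{\geq 0}+3\mathbb{Z}_{\geq 0}=\mathbb{Z}_{\geq 0}\setminus\{1\}$), so $v_\infty(g)=\frac{2-k-2A}{12}\in\mathbb{Z}$. If $v_\infty(g)\geq 0$ then $g$ is a holomorphic form of weight $\leq 0$, hence zero or constant, and $f=\delta^{k-1}(g)=0$. If $v_\infty(g)<0$ then $v_\infty(f)=v_\infty(g)\geq -\dim S_k$, which forces $A\leq 1-\frac{k}{2}+6\dim S_k$; combined with the standard bound $\dim S_k\leq k/12$ this excludes $A\geq 2$, while $A=0$ forces $k\equiv 2\pmod{12}$ and $v_\infty(g)=-\frac{k-2}{12}<-\dim S_k$, again a contradiction. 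Thus $g$ is constant and $f=0$.

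The step I expect to demand the most care is this final count, which rests on simultaneously exploiting the valence formula, the integrality of $v_\infty(g)$, the elliptic congruence encoded in $A\neq 1$, and the exact dimension formula for $S_k$, and one must verify the inequalities across all residues of $k$ modulo $12$ (the borderline cases $k\equiv 0,2\pmod{12}$ being the most delicate). Alternatively, once $g$ is known to be weakly holomorphic, one may simply invoke Guerzhoy's splitting theorem \cite{Guerzhoy}, of which this is the case $S=\{\infty\}$. By contrast the surjectivity argument is routine given Lemma \ref{lem:construction}, its only subtlety being the ordering of the pole removal so that clearing the finite poles before the cusp never reintroduces a forbidden finite pole.
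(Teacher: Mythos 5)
Your proof is correct and takes essentially the same route as the paper: surjectivity by the identical two-stage pole-clearing procedure based on Lemma \ref{lem:construction}, and injectivity by first showing $g$ is holomorphic on $\mathfrak{H}$ via the valuation behaviour of $\delta^{k-1}$ and then deriving a contradiction at the cusp from the valence formula combined with the dimension formula for $S_k$. The only difference is that you make explicit what the paper leaves implicit (the local fact $v_P(\delta^{k-1}(g))=v_P(g)-(k-1)$ for $v_P(g)<0$, and the arithmetic with $A\in 2\mathbb{Z}_{\geq 0}+3\mathbb{Z}_{\geq 0}=\mathbb{Z}_{\geq 0}\setminus\{1\}$ across residues of $k$ modulo $12$), which is a faithful and correct fleshing-out of the paper's ``elementary argument'' and ``one verifies'' steps.
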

\begin{proof}
	We first prove that $\varphi_k$ is injective. Let $f\in \ker(\varphi_k)=\widetilde{\mathcal{M}}_k\cap \delta^{k-1}(\mathcal{M}_{2-k})$ and write $f=\delta^{k-1}(g)$ for some $g\in \mathcal{M}_{2-k}$. Since $v_P(f)\geq 1-k$, an elementary argument shows that $v_P(g)\geq 0$, for every $P\in X\setminus \{\infty\}$. We next show that $v_\infty(g)\geq 0$. Assume for a contradiction that $v_\infty(g)<0$. Then $v_\infty(g)=v_{\infty}(\delta^{k-1}(g))$ and the valence formula yields that
	\begin{equation} \label{eqn:inequalityvalence2}
		\sum_{P\in X\setminus \{\infty\}}\frac{v_P(g)}{h_P}=\frac{2-k}{12}-v_\infty(g) \leq \frac{2-k}{12}+\dim S_k.
	\end{equation}
	On the other hand, using the well-known formula for $\dim S_k$, equation \eqref{eqn:inequalityvalence2} leads to a contradiction. Therefore $v_\infty(g)\geq 0$, hence $g$ is a holomorphic modular form of weight $2-k\leq 0$, which must be constant. Therefore, $f=0$ and $\varphi_k$ is injective.
	
	For surjectivity, assume that we are given $f\in \mathcal{M}_k$. If there exists $P\in X\setminus \{\infty\}$ such that $v_P(f)\leq -k$, then by Lemma \ref{lem:construction}.(i), there exists $g\in \mathcal{M}_{2-k}$ with $v_P(g)=v_P(f)+k-1$ and $v_Q(g)\geq 0$ for all $Q\in X\setminus \{P,\infty\}$. Since $v_P(\delta^{k-1}(g))=v_P(f)$, there exists $\alpha \in \mathbb C^\times$ such that $v_P(f-\alpha\delta^{k-1}(g))\geq v_P(f)+1$ and $v_Q(f-\alpha\delta^{k-1}(g))\geq \min\{0,v_Q(f)\}$, for all $Q\in X\setminus \{P,\infty\}$. Hence, by iterating this procedure, we find $f' \in \mathcal{M}_k$ such that $f-f' \in \delta^{k-1}(\mathcal{M}_{2-k})$ and such that $v_P(f')\geq 1-k$ for all $P\in X\setminus \{\infty\}$.
	
	Similarly, if $v_\infty(f')\leq -\dim S_k-1$, then by Lemma \ref{lem:construction}.(ii) there exists $g' \in \mathcal{M}_{2-k}$ such that $v_\infty(g')=v_\infty(f')$ and $v_P(g')\geq 0$, for all $P\in X\setminus \{\infty\}$. Since $v_\infty(\delta^{k-1}(g'))=v_\infty(f')$, there exists $\alpha' \in \mathbb C^\times$ such that $v_\infty(f'-\alpha'\delta^{k-1}(g'))=v_\infty(f')+1$ and $v_P(f'-\alpha'\delta^{k-1}(g'))\geq \min\{0,v_P(f')\}$, for all $P\in X\setminus \{\infty\}$. Iterating this procedure, we find $f''\in \widetilde{\mathcal{M}}_k$ such that $f'-f'' \in \delta^{k-1}(\mathcal{M}_{2-k})$. In particular, $f-f''\in \delta^{k-1}(\mathcal{M}_{2-k})$, from which it follows that $\varphi_k$ is surjective.
\end{proof}
Combining Theorems \ref{thm:cokerdelta} and \ref{thm:splitting}, we can finally give an explicit complement of the subspace of derivatives.
\begin{theorem}\label{thm:complement}
	\[
	\mathcal{Q}\mathcal{M}_k=
	\begin{cases}
		\delta(\mathcal{Q}\mathcal{M}_{k-2})\oplus \mathcal{M}_k, &\mbox{if } k\leq 1,\\
		\delta(\mathcal{Q}\mathcal{M}_{k-2})\oplus \mathcal{M}_{2-k}\cdot E_2^{k-1} \oplus \widetilde{\mathcal{M}}_k, & \mbox{if } k\geq 2.
	\end{cases}
	\]
\end{theorem}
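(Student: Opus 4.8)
The plan is to deduce Theorem \ref{thm:complement} by simply assembling the two structural results already established, namely Theorem \ref{thm:cokerdelta} and Theorem \ref{thm:splitting}, since together they pin down $\overline{\mathcal{Q}\mathcal{M}}_k=\mathcal{Q}\mathcal{M}_k/\delta(\mathcal{Q}\mathcal{M}_{k-2})$ completely and exhibit explicit lifts of each summand back into $\mathcal{Q}\mathcal{M}_k$. The whole point is that a direct sum decomposition $\mathcal{Q}\mathcal{M}_k=\delta(\mathcal{Q}\mathcal{M}_{k-2})\oplus W$ is equivalent to choosing a $\mathbb{C}$-linear section of the projection $\mathcal{Q}\mathcal{M}_k\twoheadrightarrow \overline{\mathcal{Q}\mathcal{M}}_k$ whose image is $W$; so I only need to produce, inside $\mathcal{Q}\mathcal{M}_k$, a subspace mapping isomorphically onto $\overline{\mathcal{Q}\mathcal{M}}_k$.

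First I would dispose of the case $k\leq 1$. Here Theorem \ref{thm:cokerdelta} gives $\ker(\pi_k)=0$ and $\operatorname{coker}(\pi_k)=0$, which says precisely that $\pi_k:\mathcal{M}_k\to\overline{\mathcal{Q}\mathcal{M}}_k$ is an isomorphism. Injectivity of $\pi_k$ means $\mathcal{M}_k\cap\delta(\mathcal{Q}\mathcal{M}_{k-2})=\{0\}$, and surjectivity means $\mathcal{M}_k+\delta(\mathcal{Q}\mathcal{M}_{k-2})=\mathcal{Q}\mathcal{M}_k$; these two statements together are exactly the claimed decomposition $\mathcal{Q}\mathcal{M}_k=\delta(\mathcal{Q}\mathcal{M}_{k-2})\oplus\mathcal{M}_k$.

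For $k\geq 2$ the argument is the same in spirit but requires identifying two complementary pieces. Theorem \ref{thm:cokerdelta} gives the direct sum $\overline{\mathcal{Q}\mathcal{M}}_k\cong\mathcal{B}_k\oplus\mathcal{M}_{2-k}\cdot E_2^{k-1}$, where $\mathcal{B}_k=\mathcal{M}_k/\delta^{k-1}(\mathcal{M}_{2-k})$, and the proof of part (ii) shows that the summand $\mathcal{M}_{2-k}\cdot E_2^{k-1}$ maps injectively (its elements, being pure depth-$(k-1)$ with leading coefficient in $\mathcal{M}_{2-k}$, survive in $\overline{\mathcal{Q}\mathcal{M}}_k$) and accounts exactly for $\operatorname{coker}(\pi_k)$. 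Meanwhile Theorem \ref{thm:splitting} provides the section $\varphi_k:\widetilde{\mathcal{M}}_k\xrightarrow{\ \sim\ }\mathcal{B}_k$, so $\widetilde{\mathcal{M}}_k\subset\mathcal{M}_k\subset\mathcal{Q}\mathcal{M}_k$ is an explicit lift of the $\mathcal{B}_k$-summand. Combining these, the sum $\mathcal{M}_{2-k}\cdot E_2^{k-1}\oplus\widetilde{\mathcal{M}}_k$ is a subspace of $\mathcal{Q}\mathcal{M}_k$ mapping isomorphically onto $\overline{\mathcal{Q}\mathcal{M}}_k$, which is equivalent to the asserted decomposition.

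I expect no genuine obstacle here, since the real content is entirely contained in the two prior theorems; the only care required is bookkeeping. Specifically, I would check that the three summands are pairwise independent inside $\mathcal{Q}\mathcal{M}_k$: that $\delta(\mathcal{Q}\mathcal{M}_{k-2})\cap\bigl(\mathcal{M}_{2-k}\cdot E_2^{k-1}+\widetilde{\mathcal{M}}_k\bigr)=\{0\}$ follows because a nonzero element of the latter maps to a nonzero class in $\overline{\mathcal{Q}\mathcal{M}}_k$ by the isomorphism just assembled, while $\mathcal{M}_{2-k}\cdot E_2^{k-1}\cap\widetilde{\mathcal{M}}_k=\{0\}$ follows from the depth (degree in $E_2$): elements of $\widetilde{\mathcal{M}}_k\subset\mathcal{M}_k$ have depth $0$, whereas a nonzero element of $\mathcal{M}_{2-k}\cdot E_2^{k-1}$ has depth $k-1\geq 1$. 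The mild subtlety, which I would state explicitly, is that the decomposition $\overline{\mathcal{Q}\mathcal{M}}_k\cong\mathcal{B}_k\oplus\mathcal{M}_{2-k}\cdot E_2^{k-1}$ of Theorem \ref{thm:cokerdelta} must be compatible, under $\pi_k$ and $\varphi_k$, with the chosen lifts, so that summing the lifts indeed recovers all of $\mathcal{Q}\mathcal{M}_k$; this is immediate from the constructions but worth a sentence.
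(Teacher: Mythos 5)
Your proposal is correct and takes essentially the same route as the paper, which proves Theorem \ref{thm:complement} with no further argument beyond the phrase ``combining Theorems \ref{thm:cokerdelta} and \ref{thm:splitting}''. Your explicit bookkeeping --- treating the decomposition as a choice of section of $\mathcal{Q}\mathcal{M}_k\twoheadrightarrow\overline{\mathcal{Q}\mathcal{M}}_k$, using depth to see $\mathcal{M}_{2-k}\cdot E_2^{k-1}\cap\widetilde{\mathcal{M}}_k=\{0\}$, and checking that the abstract isomorphism of Theorem \ref{thm:cokerdelta} is realized by the natural maps --- correctly fills in exactly the details the paper leaves implicit.
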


\subsection{Dimension formulas}

Using Theorem \ref{thm:splitting}, we can also compute dimension formulas for the space $\mathcal{B}_k$ after restricting the location of poles. 
Given a finite subset $S\subset X$, we define
\[
\begin{aligned}
	M_k(\ast S)&=\{f \in \mathcal{M}_k \, : \, v_P(f)\geq 0, \, \forall P \in X\setminus S\},\\
	\widetilde{M}_k(\ast S)&=M_k(\ast S)\cap \widetilde{\mathcal{M}}_k.
\end{aligned}
\]

\begin{theorem}\label{thm:dimension}
	Let $S\subset X$ be a finite subset with $\infty \in S$, and let $k\geq 2$ be an even integer. Then
	\[
	\dim \widetilde{M}_k(\ast S)=\dim M_k+\dim S_k+\sum_{P\in S'}w_P(k),
	\]
	where $S':=S\setminus \{\infty\}$ and
	\[
	w_P(k):=\begin{cases}
		2\left\lfloor \frac{k-2}{4}\right\rfloor+1, & \mbox{if }P=[i],\\
		2\left\lfloor \frac{k-2}{6}\right\rfloor+1, &  \mbox{if }P=[e^{2\pi i/3}],\\
		k-1, &\mbox{else.}
	\end{cases}
	\]
\end{theorem}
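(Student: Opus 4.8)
The plan is to recognize $\widetilde{M}_k(\ast S)$ as a space of meromorphic modular forms with prescribed bounds on pole orders, and then to convert the dimension count into the classical dimension formula for holomorphic modular forms by means of the isomorphism \eqref{eqn:dimension}. Unwinding the definitions of $M_k(\ast S)$ and $\widetilde{\mathcal{M}}_k$, and using that $\infty \in S$, an element $f$ lies in $\widetilde{M}_k(\ast S)$ precisely when $v_\infty(f)\geq -\dim S_k$, when $v_P(f)\geq 1-k$ for each $P\in S'$, and when $v_P(f)\geq 0$ for every $P\in X\setminus S$. Hence $\widetilde{M}_k(\ast S)=M_k(D)$ for the effective divisor
\[
D=\dim S_k\cdot(\infty)+\sum_{P\in S'}\frac{k-1}{h_P}\cdot(P),
\]
and \eqref{eqn:dimension} then gives $\dim\widetilde{M}_k(\ast S)=\dim M_\ell$, where
\[
\ell=k+12\deg D=k+12\dim S_k+\sum_{P\in S'}\frac{12(k-1)}{h_P}.
\]
Since $k$ is even, $\ell$ is a non-negative even integer, so $\dim M_\ell$ is governed by the classical formula $\dim M_\ell=\lfloor\ell/12\rfloor+1$ when $\ell\not\equiv 2\pmod{12}$, and $\dim M_\ell=\lfloor\ell/12\rfloor$ when $\ell\equiv 2\pmod{12}$. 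Note that only this identification and \eqref{eqn:dimension} are needed; the splitting Theorem \ref{thm:splitting} is not required for the statement about $\widetilde{M}_k(\ast S)$ itself.

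Next I would strip off every contribution to $\ell$ that is divisible by $12$, using the elementary fact that $\dim M_{\ell'+12}=\dim M_{\ell'}+1$ for all even $\ell'\geq 0$ (the residue mod $12$, and hence the $\equiv 2$ correction, is unchanged). The summand $12\dim S_k$ thereby contributes exactly $\dim S_k$, and each generic point $P\in S'$ (with $h_P=1$) contributes $\frac{12(k-1)}{h_P}=12(k-1)$ to $\ell$, hence exactly $k-1=w_P(k)$ to the dimension. All intermediate weights remain $\geq k\geq 2$, so this is legitimate. This disposes of the cusp term and of all generic points simultaneously, reducing the theorem to the identity
\[
\dim M_{k+6(k-1)\epsilon_i+4(k-1)\epsilon_\rho}=\dim M_k+w_{[i]}(k)\,\epsilon_i+w_{[e^{2\pi i/3}]}(k)\,\epsilon_\rho,
\]
where $\epsilon_i,\epsilon_\rho\in\{0,1\}$ record whether $[i]$, respectively $[e^{2\pi i/3}]$, belongs to $S'$ (each occurs at most once, since $X$ has a single point of each type).

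Finally I would verify this identity directly from the dimension formula. The crucial point is that, because $k$ is even, the weight shifts $6(k-1)$ and $4(k-1)$ produced by the elliptic points are \emph{not} divisible by $12$, so the $+12\mapsto+1$ rule no longer applies and the residue of the weight mod $12$ genuinely moves. Writing $k=12a+r$ with $r\in\{0,2,4,6,8,10\}$ and evaluating the relevant floors, the quantities $2\lfloor(k-2)/4\rfloor+1$ and $2\lfloor(k-2)/6\rfloor+1$ emerge as exactly the resulting dimension jumps. The three nontrivial cases $(\epsilon_i,\epsilon_\rho)=(1,0),(0,1),(1,1)$ each reduce to a comparison of floor functions across the six residue classes of $r$; importantly, the combined case $(1,1)$, in which the weight is shifted by $10(k-1)$, must be checked on its own and cannot simply be obtained by adding the two single-point jumps, since those jumps are computed at different residues mod $12$.

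The hard part is precisely this last step. The identification $\widetilde{M}_k(\ast S)=M_k(D)$ and the reduction via \eqref{eqn:dimension} are formal, and the cusp and generic-point contributions behave additively for the trivial reason that their weight shifts are multiples of $12$. The elliptic points are the genuine obstacle: their weight shifts $6(k-1)$ and $4(k-1)$ break this clean additivity, and one must track the mod-$12$ residue carefully to see the floor expressions in $w_{[i]}(k)$ and $w_{[e^{2\pi i/3}]}(k)$ appear and to confirm that the single-point and combined increments are mutually consistent.
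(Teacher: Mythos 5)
Your proof is correct and follows essentially the same route as the paper: identify $\widetilde{M}_k(\ast S)$ with $M_k(D_S)$ for the divisor $D_S=\dim S_k\cdot(\infty)+\sum_{P\in S'}\frac{k-1}{h_P}\cdot(P)$, apply \eqref{eqn:dimension}, and reduce by a case analysis on which elliptic points lie in $S$ (including treating the combined case $[i],[e^{2\pi i/3}]\in S$ separately) to the identities $\dim M_{7k-6}=\dim M_k+w_{[i]}(k)$, $\dim M_{5k-4}=\dim M_k+w_{[e^{2\pi i/3}]}(k)$, and $\dim M_{11k-10}=\dim M_k+w_{[i]}(k)+w_{[e^{2\pi i/3}]}(k)$. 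Your verification via $k=12a+r$ over the six even residues is the same finite check the paper performs by observing invariance under $k\mapsto k+12$ and testing $2\leq k\leq 12$.
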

\begin{proof}
	Consider the divisor
	$
	D_S=\dim S_k\cdot (\infty)+\sum_{P\in S'}\frac{k-1}{h_P}\cdot (P).
	$
	Then $M_k(D_S)=\widetilde{M}_k(\ast S)$, by definition, and $\dim \widetilde{M}_k(\ast S)=\dim M_{k+12\deg D_S}$ by equation \eqref{eqn:dimension}. We now distinguish four cases.
	\bigskip
	
	\noindent
	\textit{Case 1: $[i],[e^{2\pi i/3}] \notin S$.}
	In this case, $\deg D_S=\dim S_k+|S'|\cdot (k-1)$ and $\dim M_{k+12\deg D_S}=\dim M_k+\deg D_S=\dim M_k+\dim S_k+\sum_{P\in S'}w_P(k)$, whence the result.
	\bigskip
	
	\noindent
	\textit{Case 2: $[i] \in S$, $[e^{2\pi i/3}] \notin S$.}
	In this case, similarly to before, we deduce
	\[
	\dim \widetilde{M}_k(\ast S)=\dim M_{7k-6}+\dim S_k+\sum_{P \in S' \setminus \{[i]\}}w_P(k).
	\]
	It remains to prove that $\dim M_{7k-6}=\dim M_k+w_{[i]}(k)$, for all even $k\geq 2$. In fact, a direct computation shows that the term $\dim M_{7k-6}-\dim M_k-w_{[i]}(k)$ is invariant under $k\mapsto k+12$, so that it is enough to prove the statement for $2\leq k\leq 12$, where it can be verified directly.
	\bigskip
	
	\noindent
	\textit{Case 3: $[i] \notin S$, $[e^{2\pi i/3}] \in S$.}
	This is analogous to the previous case, except that we now have to verify that $\dim M_{5k-4}=\dim M_k+w_{[e^{2\pi i/3}]}(k)$, which can be proved similarly to before.
	\bigskip
	
	\noindent
	\textit{Case 4: $[i],[e^{2\pi i/3}] \in S$.}
	Similarly, it now suffices to verify that $\dim M_{11k-10}=\dim M_k+w_{[i]}(k)+w_{[e^{2\pi i/3}]}(k)$. This is proved similarly to before.

\end{proof}
\begin{remark}
	For a finite subset $S\subset X$ with $\infty \in S$, let $B_k(\ast S)\subset \mathcal{B}_k$ be the image of the natural injection
	\[
	M_k(\ast S)/\delta^{k-1}(M_{2-k}(\ast S))\hookrightarrow \mathcal{B}_k.
	\]
	The isomorphism $\varphi_k: \widetilde{\mathcal{M}}_k\rightarrow \mathcal{B}_k$ of Theorem \ref{thm:splitting} then restricts to an isomorphism $\widetilde{M}_k(\ast S)\cong B_k(\ast S)$, hence Theorem \ref{thm:dimension} implies a dimension formula for $B_k(\ast S)$. In the case $S=\{\infty\}$ (the "weakly holomorphic case"), this is due to Guerzhoy, \cite{Guerzhoy}.
\end{remark}

\subsection{Algebraic independence -- revisited}
As a final corollary, we prove a generalization of \cite[Theorem 5]{PasolZudilin}. With notation as in \textit{loc. cit.}, consider the meromorphic modular forms
\[
F_{4a}(\tau)=\frac{\Delta(\tau)}{E_4(\tau)^2}, \qquad F_{4b}(\tau)=\frac{E_4(\tau)\Delta(\tau)}{E_6(\tau)^2}, \qquad F_6(\tau)=\frac{E_6(\tau)\Delta(\tau)}{E_4(\tau)^3}.
\]
\begin{corollary} \label{cor:PasolZudilin}
	The set
	\[
	\{t\}\cup\bigcup_{r\geq 1}\{I^r(f) \, : \, f\in \{F_{4a},F_{4b},F_6\}\}
	\]
	is algebraically independent over $K$.
\end{corollary}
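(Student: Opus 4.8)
The plan is to invoke Theorem \ref{thm:criterion} directly, with $f_0=1$ and $(f_1,f_2,f_3)=(F_{4a},F_{4b},F_6)$. That theorem reduces the claimed algebraic independence over $K$ to a purely linear statement: the algebraic independence of $\{t\}\cup\bigcup_{r\geq 1}\{I^r(f_i)\}$ holds if and only if the classes of $1,F_{4a},F_{4b},F_6$ are $\mathbb C$-linearly independent in the quotient $\overline{\mathcal{Q}\mathcal{M}}=\mathcal{Q}\mathcal{M}/\delta(\mathcal{Q}\mathcal{M})$. So the entire task is to verify this linear independence.

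Since $\delta$ is homogeneous of weight $2$, the subspace $\delta(\mathcal{Q}\mathcal{M})$ is graded, and the four forms are homogeneous of weights $0$ (for $1$), $4$ (for both $F_{4a}$ and $F_{4b}$), and $6$ (for $F_6$). Hence a putative relation $\alpha_0\cdot 1+\alpha_1 F_{4a}+\alpha_2 F_{4b}+\alpha_3 F_6\in\delta(\mathcal{Q}\mathcal{M})$ decouples into three independent conditions, one in each of the weights $0$, $4$, and $6$, which I would treat separately. For weight $0$, Theorem \ref{thm:cokerdelta} gives $\overline{\mathcal{Q}\mathcal{M}}_0\cong\mathcal{M}_0=\mathbb C$ with $1$ mapping to a nonzero class, so $\alpha_0=0$ immediately.

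For weights $4$ and $6$ I would first note that $\dim S_4=\dim S_6=0$ and that $F_{4a},F_{4b},F_6$ are genuine meromorphic \emph{modular} forms, i.e.\ lie in $\mathcal{M}_k$ rather than merely in $\mathcal{Q}\mathcal{M}_k$. By Theorem \ref{thm:cokerdelta} the image of $\mathcal{M}_k$ in $\overline{\mathcal{Q}\mathcal{M}}_k$ is exactly the summand $\mathcal{B}_k=\mathcal{M}_k/\delta^{k-1}(\mathcal{M}_{2-k})$, so it suffices to establish linear independence in $\mathcal{B}_k$. Here the splitting of Theorem \ref{thm:splitting}, namely the isomorphism $\varphi_k:\widetilde{\mathcal{M}}_k\cong\mathcal{B}_k$, is the decisive tool: if I can show that each of the three forms already lies in the corresponding $\widetilde{\mathcal{M}}_k$, then their classes in $\mathcal{B}_k$ are linearly independent precisely when the forms themselves are linearly independent as meromorphic functions, which is elementary to check.

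The only remaining content is a divisor bookkeeping, which I expect to be the main (though routine) obstacle. Using $\operatorname{div}(E_4)=\frac{1}{3}([e^{2\pi i/3}])$, $\operatorname{div}(E_6)=\frac{1}{2}([i])$, and $\operatorname{div}(\Delta)=(\infty)$, I would compute that all three forms satisfy $v_\infty=1\geq 0=-\dim S_k$, that $F_{4a}$ has a pole only at $[e^{2\pi i/3}]$ of order $v_{[e^{2\pi i/3}]}=-2\geq 1-4$, that $F_{4b}$ has a pole only at $[i]$ of order $-2\geq 1-4$, and that $F_6$ has a pole only at $[e^{2\pi i/3}]$ of order $-3\geq 1-6$. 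These inequalities show $F_{4a},F_{4b}\in\widetilde{\mathcal{M}}_4$ and $F_6\in\widetilde{\mathcal{M}}_6$. Since $F_{4a}$ and $F_{4b}$ have poles at distinct points they are linearly independent as functions, forcing $\alpha_1=\alpha_2=0$, and since $F_6\neq 0$ we get $\alpha_3=0$. This proves the linear independence modulo $\delta(\mathcal{Q}\mathcal{M})$ and hence, via Theorem \ref{thm:criterion}, the corollary. No genuinely hard step arises once Theorems \ref{thm:criterion}, \ref{thm:cokerdelta}, and \ref{thm:splitting} are available; the work is entirely in keeping the pole-order inequalities straight.
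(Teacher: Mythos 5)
Your proposal is correct and takes essentially the same route as the paper: reduce via Theorem \ref{thm:criterion} to $\mathbb C$-linear independence of the classes of $1,F_{4a},F_{4b},F_6$ modulo $\delta(\mathcal{Q}\mathcal{M})$, verify by divisor bookkeeping that $F_{4a},F_{4b}\in\widetilde{\mathcal{M}}_4$ and $F_6\in\widetilde{\mathcal{M}}_6$, and conclude from the splitting of the derivatives subspace --- your combined use of Theorems \ref{thm:cokerdelta} and \ref{thm:splitting} is exactly the content of Theorem \ref{thm:complement}, which is what the paper cites. One inessential slip: $\mathcal{M}_0$ is the field $\mathbb C(j)$ of meromorphic modular functions, not $\mathbb C$, but your weight-$0$ step only needs that $1$ has nonzero class in $\overline{\mathcal{Q}\mathcal{M}}_0$, i.e.\ the injectivity of $\pi_0$ from Theorem \ref{thm:cokerdelta}.(i), which holds.
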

\begin{proof}
	By Theorem \ref{thm:criterion}, it is sufficient to prove that the classes of $1$, $F_{4a},F_{4b}$, and $F_6$ in $\mathcal{Q}\mathcal{M}/\delta(\mathcal{Q}\mathcal{M})$ are $\mathbb C$-linearly independent. Since $E_4$ and $E_6$ both have at most a simple zero at every $\tau\in \mathfrak{H}$ and no zero at $\infty$, one sees that $F_{4a},F_{4b} \in \widetilde{\mathcal{M}}_4$ and that $F_6\in \widetilde{\mathcal{M}}_6$. The $\mathbb C$-linear independence of their classes modulo $\delta(\mathcal{Q}\mathcal{M})$ now follows from Theorem \ref{thm:complement}.
\end{proof}

\bibliographystyle{amsplain}

\begin{thebibliography}{99}
	
	\bibitem{Bol} \textsc{G.~Bol:} Invarianten linearer Differentialgleichungen. \textit{Abh. Math. Sem. Univ. Hamburg} 16 (1949), nos. 3--4, 1--28.
	
	
	\bibitem{Brown:MMV} \textsc{F.~Brown:} Multiple Modular Values and the relative completion of the fundamental group of $M_{1,1}$. arXiv:1407.5167v4, 2017.
	
	\bibitem{BrownHain} \textsc{F.~Brown, R.~Hain:} Algebraic de Rham theory for weakly holomorphic modular forms of level one. \textit{Algebra Number Theory} 12 (2018), no. 3, 723--750.
	
	\bibitem{Chen} \textsc{K.T.~Chen:} Iterated path integrals. \textit{Bull. Amer. Math. Soc.} 83 (1977), no. 5, 831--879.
	
	\bibitem{ConnesKreimer} \textsc{A.~Connes, D.~Kreimer:} Renormalization in quantum field theory and the Riemann-Hilbert problem. I. The Hopf algebra structure of graphs and the main theorem. \textit{Comm. Math. Phys.} 210 (2000), no. 1, 249--273.
	
	\bibitem{DDMS} \textsc{M.~Deneufch\^atel, G.H.E.~Duchamp, V.H.N.~Minh, A.I.~Solomon:} Independence of hyperlogarithms over function fields via algebraic combinatorics. \textit{Algebraic informatics,} 127--139, Lecture Notes in Comput. Sci., 6742, \textit{Springer, Heidelberg,} 2011.
	
	\bibitem{Guerzhoy} \textsc{P.~Guerzhoy:} Hecke operators for weakly holomorphic modular forms and supersingular congruences. \textit{Proc. Amer. Math. Soc.} 136 (2008), no. 9, 3051--3059.
	
	\bibitem{KanekoZagier} \textsc{M.~Kaneko, D.~Zagier:} A generalized Jacobi theta function and quasimodular forms. \textit{The moduli space of curves (Texel Island, 1994),} 165--172, Progr. Math., 129, \textit{Birkh\"auser Boston, Boston, MA,} 1995.
	
	\bibitem{Kozlov} \textsc{P.Yu.~Kozlov:} On the algebraic independence of functions of a certain class. (Russian) \textit{Izv. Ross. Akad. Nauk Ser. Mat.} 77 (2013), no. 1, 23–32; translation in \textit{Izv. Math.} 77 (2013), no. 1, 20–29
	
	\bibitem{Mahler} \textsc{K.~Mahler:} On algebraic differential equations satisfied by automorphic functions. \textit{J. Austral. Math. Soc.} 10 (1969), 445--450.
	
	\bibitem{Manchon} \textsc{D.~Manchon:} Hopf algebras in renormalisation. \textit{Handbook of algebra. Vol. 5,} 365--427, Handb. Algebr., 5, \textit{Elsevier/North-Holland, Amsterdam,} 2008.
	
	\bibitem{Manin} \textsc{Yu.I.~Manin:} Iterated integrals of modular forms and noncommutative modular symbols. \textit{Algebraic geometry and number theory}, 565--597, Progr. Math., 253, \textit{Birkh\"user Boston, Boston, MA,} 2006.
	
	\bibitem{Matthes:AlgebraicStructure} \textsc{N.~Matthes:} On the algebraic structure of iterated integrals of quasimodular forms. \textit{Algebra Number Theory} 11 (2017), no. 9, 2113--2130.
	
	\bibitem{PasolZudilin} \textsc{V.~Paşol, W.~Zudilin:} Magnetic (quasi-)modular forms. arXiv:2009.14609, 2020.
	
	\bibitem{Radford} \textsc{D.E.~Radford:} A natural ring basis for the shuffle algebra and an application to group schemes.
	\textit{J. Algebra} 58 (1979), no. 2, 432--454.
	
	\bibitem{Reutenauer} \textsc{C.~Reutenauer:} Free Lie algebras. London Mathematical Society Monographs. New Series, 7. Oxford Science Publications. \textit{The Clarendon Press, Oxford University Press, New York,} 1993. xviii+269 pp.
	
	\bibitem{Royer} \textsc{E.~Royer:} Quasimodular forms: an introduction. \textit{Ann. Math. Blaise Pascal} 19 (2012), no. 2, 297--306.
	
	
\end{thebibliography}

\end{document}